\DeclareMathOperator{\MCG}{\mathcal{MCG}}
\DeclareMathOperator{\CMCG}{\mathcal{CMCG}} 
\DeclareMathOperator{\Isom}{\emph{Isom}}
\newcommand{\PP}{\mathcal{P}}
\newcommand{\QQ}{\mathcal{Q}}
\newcommand{\DD}{\mathcal{D}}
\newcommand{\EE}{\mathcal{E}}
\newtheorem{theorem}{Theorem}[section]
\newtheorem{lemma}[theorem]{Lemma}
\theoremstyle{definition}
\newtheorem{definition}[theorem]{Definition}
\theoremstyle{cor}
\newtheorem{cor}[theorem]{Corollary}
\newtheorem*{question}{Question}
\newtheorem*{ack}{Acknowledgments}
\theoremstyle{remark}
\newtheorem{remark}[theorem]{Remark}
\numberwithin{equation}{section}
\begin{document}

\title{High Distance Knots in Closed 3-Manifolds}

\author{Marion Moore Campisi}
\address{Marion Moore Campisi, Department of Mathematics, University of Texas, Austin, 78712}
\email{marion@math.utexas.edu}

\author{Matt Rathbun}
\address{Matt Rathbun, Department of Mathematics, Michigan State University, 48824}
\email{mrathbun@math.msu.edu}

\keywords{distance, high distance, Heegaard splittings, knots, pants decompositions, curve complex, coarse geometry, disk set, coarse mapping class group}

\begin{abstract}
Let $M$ be a closed $3$-manifold with a given Heegaard splitting. We show that after a single stabilization, some core of the stabilized splitting has arbitrarily high distance with respect to the splitting surface. This generalizes a result of Minsky, Moriah, and Schleimer for knots in $S^3$. We also show that in the complex of curves, handlebody sets are either coarsely distinct or identical. We define the \emph{coarse mapping class group of a Heegaard splitting}, and show that if $(S, V, W)$ is a Heegaard splitting of genus $\geq 2$, then $\CMCG(S,V,W) \cong \MCG(S, V,W)$.
\end{abstract}

 \maketitle
 
\begin{center}
\today
\end{center}

\section{Introduction}
\label{section:introduction}
Hempel \cite{Hem3MAVFCC} developed the notion of the distance of a Heegaard splitting, generalizing the idea of a strongly irreducible Heegaard splitting (distance $\geq 2$). The notion of distance of Heegaard splittings has become increasingly important. Thompson introduced the Disjoint Curve Property for Heegaard splittings \cite{ThoDCPG2M}, which explores the case distance $= 2$. She uses this property to find a necessary condition for a genus $2$ manifold to be hyperbolic. Schleimer showed that all sufficiently high genus splittings of closed manifolds have the property \cite{SchlDCP}, and later Kobayashi and Rieck improved the genus bound to something linear in the number of tetrahedra in a triangulation of the manifold \cite{KobRieLBGHSWDGT2}. Lustig and Moriah have made use of a combinatorial criterion to ensure splittings are high distance, and proved the existence of knots in $S^3$ with infinitely many Dehn surgeries yielding high distance splittings \cite{LusMorHDHSVFTT}. Work of Scharlemann-Tomova \cite{SchaTomAHGBD}, \cite{TomMBSRKD}, and Hartshorne \cite{HarHSHMHB} have linked distance to the existence of incompressible surfaces, bridge surfaces, and even other Heegaard splittings. 
 
Minsky, Moriah, and Schleimer \cite{MinMorSchHDK} observe that construction of knots whose exteriors have high distance is relatively easy, first constructing a splitting of the desired distance \cite{Hem3MAVFCC}, \cite{KobNisNSC3MHGGHS}, and then removing a knot. The construction of these splittings, however, gives one little control of the resulting $3$-manifold. If we wish to specify the $3$-manifold, the question becomes more subtle. They prove the existence of knots in $S^3$ with arbitrarily high distance splittings of their exteriors. They also ask the question of whether a similar property holds in any $3$-manifold.
 
 We answer this question affirmatively and prove:
 
 \begin{theorem}
For any closed $3$-manifold $M$, any $g \geq genus(M) + 1$, and any $n > 0$, there is a knot $K \subset M$ and a genus $g$ splitting of $M - n(K)$ having distance greater than $n$.
\end{theorem}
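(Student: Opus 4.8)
The plan is to reduce the theorem first to a single stabilization and then to a coarse-geometric statement about the curve complex $\mathcal{C}(S)$. Since $g\ge\mathrm{genus}(M)+1$, the manifold $M$ admits a Heegaard splitting $(\Sigma,A,B)$ of genus $g-1$; stabilizing it once gives a genus $g$ Heegaard splitting $(S,V,W)$ of $M$ in which the new handle is attached to $A$, so that the core of the new handle is a core of the handlebody $V$. For any such core $K$, deleting an open regular neighbourhood turns $V$ into a genus $g$ compression body $V_K:=V-n(K)$ with one toroidal negative boundary component, while $W$ is unchanged; hence $S$ is a genus $g$ Heegaard surface for $M-n(K)$, and the distance of that splitting equals $d_{\mathcal{C}(S)}(\DD(V_K),\DD(W))$. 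So it suffices to find a core $K$ making this number exceed $n$.

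The room for this comes from the handlebody group $\MCG(V)$. Fix one ``standard'' core $K_0$; then every core of the stabilized splitting lying in $V$ has the form $f(K_0)$ with $f\in\MCG(V)$, and since $f$ is a self-homeomorphism of $V$ sending $V_{K_0}$ to $V_{f(K_0)}$ one has $\DD(V_{f(K_0)})=f_*\DD(V_{K_0})$ in $\mathcal{C}(S)$, while $f_*\DD(V)=\DD(V)$ and $\DD(W)$ is not moved. Thus the distance of the resulting splitting of $M-n(f(K_0))$ is $d_{\mathcal{C}(S)}(f_*\DD(V_{K_0}),\DD(W))$, and the problem becomes: using only elements of $\MCG(V)$, which necessarily keep it inside $\DD(V)$, push the (quasiconvex) disk set of the drilled compression body arbitrarily far from the fixed quasiconvex set $\DD(W)$.

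For this I would take $f=\psi^{N}$ for a carefully chosen pseudo-Anosov $\psi\in\MCG(V)$ and let $N\to\infty$. As $\mathcal{C}(S)$ is Gromov hyperbolic and disk sets are quasiconvex (Masur--Minsky), $\psi^{N}_*\DD(V_{K_0})$ is dragged toward the attracting fixed point $\psi^{+}$ of $\psi$ in $\partial\mathcal{C}(S)$, so that once $\psi^{+}$ and the limit set of $\DD(V_{K_0})$ both avoid the limit set of $\DD(W)$, a nearest-point-projection estimate yields $d_{\mathcal{C}(S)}(\psi^{N}_*\DD(V_{K_0}),\DD(W))\to\infty$, finishing the proof. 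The heart of the matter --- and where the new results of this paper enter --- is guaranteeing that such a $\psi$ exists: one needs that $\DD(V)$ has a large limit set not absorbed into that of $\DD(W)$, which is exactly the dichotomy that handlebody sets are coarsely distinct or coarsely identical applied to $(S,V,W)$, together with the fact that the limit set of the smaller disk set $\DD(V_{K_0})$ lies thinly inside that of $\DD(V)$. Proving these coarse-geometric properties of handlebody and compression-body disk sets, rather than the topological passage to one stabilization, is the main obstacle.
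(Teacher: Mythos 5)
Your high-level skeleton --- stabilize once, remove a core, run powers of a suitable pseudo-Anosov in $\MCG(V')$, and use hyperbolicity of $\mathcal{C}(S')$ plus Lemma 2.1 of Minsky--Moriah--Schleimer to send $\DD_{V'-n(K)}$ far from $\DD_{W'}$ --- matches the paper's strategy and the MMS template. But the proposal stops exactly where the real work begins, and the gesture you make toward filling it does not hold up.

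The gap is the construction of $\psi$. You acknowledge that ``the heart of the matter\ldots\ is guaranteeing that such a $\psi$ exists,'' and you propose to deduce this from the dichotomy that handlebody disk sets are either coarsely identical or coarsely distinct (Theorem \ref{theorem:CoarselyDistinct}). That reasoning is circular in the context of this paper: Theorem \ref{theorem:CoarselyDistinct} is proved \emph{after} and \emph{using} the same machinery as the main theorem (pants decompositions of full type, the seam-traversing curve $\gamma$, the meridian $a$, and Kobayashi's lemma that no lamination in the closure of a compression-body disk set can traverse all seams of a pants decomposition of that compression body). It is not an independent input you can lean on here. Moreover, even granting the coarse dichotomy, it does not by itself yield a pseudo-Anosov $\psi$ extending over $V'$ with $\psi^{+}\notin\overline{\DD_{V'-n(K)}}$ and $\psi^{-}\notin\overline{\DD_{W'}}$ in $\mathcal{PML}(S')$, which is what Lemma \ref{lem:dis} actually requires --- ``coarsely distinct in $\mathcal{C}(S)$'' is a statement about Hausdorff distance, not about the $\mathcal{PML}$-closures of the disk sets avoiding a specific pair of ending laminations. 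Your remark that ``the limit set of $\DD(V_{K_0})$ lies thinly inside that of $\DD(V)$'' is likewise an assertion without proof, and it is not trivial: drilling out a core enlarges the disk set's closure.

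What the paper does, and what you would need to do, is construct $\psi$ concretely: build pants decompositions $(\PP,\QQ)$ with a curve $\gamma$ of full type with respect to both (Lemma \ref{lem:curve}, which in turn uses the Whitehead-graph $2$-connectedness argument for irreducible summands and a careful connect-sum procedure for the reducible case), stabilize and extend to $(\PP',\QQ')$ along with a meridian $a$ of $V'$ that traverses every seam of both, then set $\Phi_{N}=\tau_{a}^{N}\circ(\tau_{b}\tau_{c}^{-1})\circ\tau_{a}^{-N}$ for filling meridians $b,c$ and take $N$ large. The full-type condition combined with Kobayashi's Lemma \ref{lem:lam} is what certifies that the stable and unstable laminations avoid the relevant closures; this is the step your proposal replaces with a citation that does not exist independently of this argument.
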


In fact, we prove that one can start with any Heegaard splitting of $M$, and modify it slightly to be the splitting in question:

\begin{theorem}
\label{theorem:Main Theorem}
Given a closed $3$-manifold $M$ with a Heegaard splitting $(S, V, W)$, and any $n > 0$, $(S, V, W)$ can be stabilized once to give $(S', V', W')$ such that there exists a core $K$ of $(S', V', W')$ with $d((V' - n(K)), W') > n$.
\end{theorem}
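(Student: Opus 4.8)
The plan is to translate the statement into the curve complex $\mathcal{C}(S')$ and then to choose the core $K$ inside a pseudo-Anosov orbit. After one stabilization, $V'$ is obtained from $V$ by a boundary connect sum with a solid torus (equivalently, by attaching a trivial $1$-handle), and similarly for $W'$; in particular $V\subset V'$ and $W\subset W'$ as sub-handlebodies, and there is a canceling pair of meridian disks $E\subset V'$, $D\subset W'$ with $\abs{\partial E\cap\partial D}=1$. By a core $K$ of $(S',V',W')$ I mean a core circle of the handlebody $V'$; for any such $K$ the triple $(S',\, V'-n(K),\, W')$ is a genus $(\mathrm{genus}(S)+1)$ Heegaard splitting of the knot exterior $M-n(K)$, and its distance is $d\big(\mathcal{D}(V'-n(K)),\,\mathcal{D}(W')\big)$, computed in $\mathcal{C}(S')$. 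Thus it suffices to exhibit a core $K$ with $d\big(\mathcal{D}(V'-n(K)),\,\mathcal{D}(W')\big)>n$. I will assume $\mathrm{genus}(S')\ge 2$, treating the degenerate case $\mathrm{genus}(S')\le 1$ (i.e.\ $M=S^3$ with its genus $0$ splitting, where $\MCG(S')$ is elementary) separately.

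Now let $\mathcal{H}(V')\le\MCG(S')$ be the handlebody group, i.e.\ the subgroup of mapping classes extending over $V'$. If $\Phi\in\mathrm{Homeo}(V')$ and $K_0$ is a fixed core, then $K_m:=\Phi^m(K_0)$ is again a core, it lies in $V'\subseteq M$, and $\mathcal{D}(V'-n(K_m))=\phi^m\big(\mathcal{D}(V'-n(K_0))\big)$ in $\mathcal{C}(S')$, where $\phi=[\,\Phi|_{S'}\,]\in\mathcal{H}(V')$. Since $\mathrm{genus}(S')\ge 2$, $\mathcal{H}(V')$ contains pseudo-Anosov elements of $S'$ (for instance a suitable product of powers of Dehn twists about two meridian disks of $V'$ whose boundary curves together fill $S'$, via Thurston's construction); fix such a $\phi$, with stable and unstable laminations $\lambda^{\pm}(\phi)\in\mathcal{PML}(S')$. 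By the north--south dynamics of pseudo-Anosov maps on $\mathcal{C}(S')$ (Masur--Minsky),
\[
 d\big(\phi^{m}\mathcal{D}(V'-n(K_0)),\ \mathcal{D}(W')\big)\ \longrightarrow\ \infty \qquad (m\to\infty),
\]
provided the $\mathcal{PML}(S')$--closure of $\mathcal{D}(V'-n(K_0))$ does not contain $\lambda^{-}(\phi)$ and the closure of $\mathcal{D}(W')$ does not contain $\lambda^{+}(\phi)$. Granting this, any $K=K_m$ with $m$ larger than the resulting threshold does the job.

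The heart of the argument, and the place where a single stabilization is exactly what is needed, is arranging these two nonaccumulation conditions with no hypothesis on $(S,V,W)$ or $M$. For the first, I would take $K_0$ to be a core that runs through the new handle, so that disks of $V'-n(K_0)$ are, up to the bounded ambiguity coming from $E$, disks that have been pushed off the new handle; choosing the pseudo-Anosov $\phi\in\mathcal{H}(V')$ so that $\lambda^{\pm}(\phi)$ genuinely fill $S'$ and cross the new handle then keeps $\lambda^{-}(\phi)$ out of the closure of $\mathcal{D}(V'-n(K_0))$. For the second, $\lambda^{+}(\phi)\notin\overline{\mathcal{D}(W')}$, I would invoke the canceling pair together with the handlebody-set dichotomy proved earlier in this paper: because $\partial E$ bounds a disk in $V'$ but, by the intersection obstruction coming from $D$, not in $W'$, the sets $\mathcal{D}(V')$ and $\mathcal{D}(W')$ are coarsely distinct rather than coarsely identical, so $\mathcal{H}(V')$ is not ``absorbed'' by $W'$ and one can select $\phi\in\mathcal{H}(V')$ whose filling fixed laminations avoid $\overline{\mathcal{D}(W')}$. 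I expect the main obstacle to be making both choices simultaneously and uniformly: producing one pseudo-Anosov $\phi\in\mathcal{H}(V')$ and one core $K_0$ that satisfy both nonaccumulation conditions at once, which will require explicit control of $\phi$ (a Thurston-- or Penner--type construction from curves adapted to the stabilizing handle) rather than a soft genericity statement.
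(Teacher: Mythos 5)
Your high-level strategy matches the paper's: translate to the curve complex, find a pseudo-Anosov $\phi$ in the handlebody group of $V'$, iterate on a core $K_0$, and invoke Lemma \ref{lem:dis} (Minsky--Moriah--Schleimer's Lemma 2.1) once the two nonaccumulation conditions are in hand. But those two nonaccumulation conditions are the entire content of the theorem, and your proposal does not actually establish them. The criterion ``\,$\lambda^\pm(\phi)$ fills $S'$ and crosses the new handle\,'' is far from sufficient to conclude $\lambda^-(\phi)\notin\overline{\mathcal{D}(V'-n(K_0))}$: the closure of a handlebody disk set in $\mathcal{PML}(S')$ is large, and it contains plenty of filling laminations. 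The tool the paper actually uses is Lemma \ref{lem:lam} (no lamination in the closure of a compression body's disk set traverses all seams of a pants decomposition of that compression body), and the real work is producing pants decompositions $\PP'$, $\QQ'$ of $V'-n(K)$ and $W'$ and a single meridian $a$ of $V'$ that is of full type with respect to \emph{both}. This is what Sections \ref{section:graphs} and \ref{sec:PantsandCurve} (Lemmas \ref{lem:prettypants}, \ref{lem:irreducible}, \ref{lem:curve}) and the explicit handle-attachment/band-sum construction of $a$ in Section \ref{sec:proof} supply; you need this concrete curve so that the conjugated pseudo-Anosov $\tau_a^N\Phi_0\tau_a^{-N}$ has laminations converging to $[a]$, at which point Lemma \ref{lem:lam} applies to both sides simultaneously.

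Separately, your appeal to ``the handlebody-set dichotomy proved earlier in this paper'' is circular: Theorem \ref{theorem:CoarselyDistinct} appears \emph{after} Theorem \ref{theorem:Main Theorem} and its proof reuses exactly the pants-decomposition and seam-traversing machinery developed for Theorem \ref{theorem:Main Theorem}, so you cannot invoke it here without already having the missing construction. And even granting it, ``$\mathcal{D}(V')$ and $\mathcal{D}(W')$ coarsely distinct, so one can select $\phi$ with $\lambda^+(\phi)\notin\overline{\mathcal{D}(W')}$'' is precisely the kind of soft genericity statement you correctly flag as insufficient a sentence later. In short: the reduction to two $\mathcal{PML}$-nonaccumulation claims is right, but you have outlined the scaffolding while omitting the combinatorial core (the Whitehead-graph argument for full-type pants decompositions, the careful connect-sum/stabilization producing the curve $a$, and the application of Lemma \ref{lem:lam}) that makes the proof go through for an arbitrary closed $M$.
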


In 2001, Abrams and Masur posed the following question:

\begin{question}
If $\DD_V$ and $\DD_W$ are two disk complexes having bounded Hausdorff distance in the curve complex, then are the disk complexes identical?
\end{question}

A slight modification of the techniques of the proof of Theorem \ref{theorem:Main Theorem} answers this question affirmatively. In Theorem \ref{theorem:CoarselyDistinct}, we show that handlebody disk sets are either identical, or coarsely distinct. In other words, a handlebody disk set is uniquely determined by its coarse geometry. Schleimer has also announced a proof of this, using different techniques.

Rafi and Schleimer \cite{RafSchlCCCBR} show that the mapping class group of a surface is isomorphic to the quasi-isometry group of the curve complex. As a corollary to Theorem \ref{theorem:CoarselyDistinct}, we prove an analogous result to Rafi's and Schleimer's. The so-called Coarse Mapping Class Group of a Heegaard splitting is isomorphic to the genuine Mapping Class Group of the splitting.

In section \ref{section:definitions} we recount definitions and background. In section \ref{section:graphs} we discuss some connections between Heegaard splittings and planar graphs and prove some useful lemmas concerning the construction of ``nice" pants decompositions for splittings. In section \ref{section:outline} we discuss the outline of the proof of Theorem \ref{theorem:Main Theorem}. In section \ref{sec:PantsandCurve} we prove the main lemmas used in the proof of Theorem \ref{theorem:Main Theorem}, and in section \ref{sec:proof} we prove the theorem itself. Finally, in section \ref{section:corollaries}, we recount some corollaries, discuss consequences for the complex of curves, and prove Theorem \ref{theorem:CoarselyDistinct}.

\vspace{3 mm}
\begin{ack} We would like to thank Abigail Thompson for her advice and helpful skepticism. We would also like to thank Jesse Johnson for the coarse geometric interpretation. 
The authors were supported in part by NSF VIGRE Grants DMS-0135345 and DMS-0636297.
\end{ack}

\section{Definitions}
\label{section:definitions}

\subsection{Compression Bodies}

\begin{definition} Let $S$ be the boundary of a $3$-manifold $M$, and let $D$ be a compressing disk (or possibly a collection of compression disks) for $S$. We fix some notation. Let $M | D = M \setminus (D \times (-\epsilon, \epsilon))$, and  $S| \partial D = S \setminus (\partial D \times (-\epsilon, \epsilon))$, for some small $\epsilon$. For convenience, we will also sometimes refer to this as $S | D$.

\end{definition}

\begin{definition} A \emph{compression body} $V$ is the result of taking the product of a surface with $[0, 1]$, and attaching $2$-handles along $S \times \{0\}$, and then $3$-handles along any resulting $2$-sphere components. $S \times \{1\}$ is called $\partial_+ V$, and $\partial V \setminus \partial_+ V$ is called $\partial_- V$. A \emph{handlebody} is a compression body where $\partial_- V = \emptyset$. A \emph{Heegaard splitting} is a triple $(S, V, W)$, where $S$ is a surface, $V$ and $W$ are compression bodies, and $\partial_+ V = \partial_+ W = S$.
\end{definition}

\begin{definition} A \emph{cut system} for a handlebody $V$ is a collection of disjoint, non-parallel compressing disks $\DD$ for $V$, such that $V | \DD$ is a single $3$-ball.
\end{definition}

\begin{definition} If $(S, V, W)$ is a Heegaard splitting of a closed manifold $M$, and $\DD$ and $\EE$ are cut systems for $V$ and $W$, respectively, then the triple $(S, \partial \DD, \partial \EE)$ is called a \emph{Heegaard diagram} for the splitting. Note that a Heegaard diagram depends on the splitting, and on the cut systems, but a diagram does determine the $3$-manifold $M$ completely.
\end{definition}

In short, a Heegaard diagram is a compact means of encoding the information about the gluing map between two handlebodies. One can start with the surface (thickened slightly), then attach $2$-handles along the curves of $\partial \DD$ on one side, and then attach a $3$-handle along the resulting sphere boundary component. This is the handlebody $V$. Attaching $2$-handles along $\partial \EE$ on the other side, and then attaching a $3$-handle, gives $W$, and hence all of $M$.

\begin{definition} We will say that a \emph{pants decomposition of a compression body $V$} is a maximal set of disjoint, non-parallel essential curves $\PP$ in $\partial V$ such that each curve $p$ in $\PP$ bounds a disk in $V$.  Note that this definition applies even if $V$ is genus one. Note also that if $V$ is a handlebody of genus greater than $1$, then a pants decomposition decomposes $V$ into solid pairs of pants. We will say that a \emph{pants decomposition of a Heegaard splitting} $(S, V, W)$ is a pair $(\PP, \QQ)$ where $\PP$ is a pants decomposition of $V$ and $\QQ$ is a pants decomposition of $W$.
\end{definition}

\subsection{Surfaces}

Let $S$ be a closed, orientable surface of genus $\geq 2$, and fix a hyperbolic structure on $S$. 

\begin{definition}
A \emph{(geodesic) lamination} is a compact subset of $S$ which is a union of complete, disjoint geodesics on $S$. A \emph{measured lamination} is a lamination, together with a transverse measure. We denote by $\mathcal{ML}(S)$ the set of all measured laminations on $S$. Note that $\mathcal{ML}(S) \subset \{ \mbox{functions} \ f: \mathcal{C}_S^0 \to \mathbb{R}_{\geq 0}\}$, and therefore inherits a natural topology. Then define the set of \emph{projective measured laminations} of $S$ to be $\mathcal{PML}(S) = \mathcal{ML}(S) / \mbox{(scalar multiplication of measures)}$, with the quotient topology.
 \end{definition}
 
 $\mathcal{PML}(S)$ is homeomorphic to $S^{6g - 7}$, and can be viewed as a compactification of the Teichm\"{u}ller space of $S$. 
 
 \begin{definition}
A homeomorphism $\phi : S \to S$ is \emph{pseudo-Anosov} if there exists a $k > 1$, and a pair of transverse measured laminations $\lambda^{\pm}$, such that $\phi(\lambda^+) = k\lambda^+$, and $\phi(\lambda^-) = \frac{1}{k}\lambda^-$. $\lambda^+$ is called the \emph{stable lamination of $\phi$} and $\lambda^-$ is the \emph{unstable lamination}.
 \end{definition} 
 
(For more on laminations, see \cite{BonGLS} or \cite{CasBleASANT}.)

\begin{definition}
Let $P$ be a pair of pants. A \emph{seam} of $P$ is an essential properly embedded arc in $P$ which connects two distinct boundary components of $P$. A \emph{wave} of $P$ is an essentially properly embedded arc in $P$ with endpoints on the same boundary component of $P$. Suppose $P \subset S$ and $\alpha$ is a simple closed curve in $S$. We say $\alpha$ \emph{traverses a seam in $P$} (resp. \emph{traverses a wave in $P$}) if it intersects $P$ minimally (up to isotopy) and a component of $\alpha \cap P$ is a seam (resp. wave). If $\PP$ is a pants decomposition of a compression body $V$, then a curve $\alpha$ on $\partial V$ is said to \emph{traverse all the seams of $\PP$} if $\alpha$ traverses every seam of every pair of pants of $\partial V | \PP$.
\end{definition}

\begin{definition} (see \cite{KobHSLPAH})
Let $L \subset S$ be a geodesic lamination. Given a pants decomposition $\QQ$ we say that $L$ is \emph{full type with respect to $\QQ$} if $L$ traverses all the seams of $\QQ$.  
We will say that a pants decomposition $(\PP, \QQ)$ of a Heegaard splitting $(S, V, W)$ is \emph{full type} if every seam of $\QQ$ is traversed by a curve of $\PP$, i.e. $\PP$ is full type with respect to $\QQ$.  Note that this is not symmetric.  
\end{definition}

\begin{definition}
A segment $\sigma$ of a curve $\alpha$ with respect to two curves $\beta_1, \beta_2$ is a subinterval of $\alpha$ with $\partial \sigma = (\sigma \cap \beta_1) \cup (\sigma \cap \beta_2)$. We denote a segment $\sigma(\alpha, \beta_1, \beta_2)$.
\end{definition}

\begin{remark} If $P$ has boundary components $p_1, p_2,$ and $p_3$, then $\alpha$ traverses a wave of $P$ if it intersects $P$ minimally and has a segment $\sigma(\alpha, p_i, p_j)$ for some $i = j$.
\end{remark}

\subsection{Graphs}

\begin{definition}
A \emph{(undirected) graph} is a pair $G = (X, E)$ of sets, with $E\subset X\times X$, such that if $e = (x, y) \in E$, then $\overline{e} = (y, x) \in E$. The elements of $X$ are the \emph{vertices} of the graph, and the elements of $E$ are called \emph{edges}. If $e = (x, y) \in E$, then we say that $e$ is an edge between $x$ and $y$, or that $e$ is \emph{incident} to $x$ and $y$, allowing for the possibility that $x=y$. 
\end{definition}

\begin{definition}
Let $G = (X, E)$ be a graph, let $x \in X$, and let $e = (y, z) \in E$. $G - x$ is the graph which is the result of removing $x$ from $X$, and removing any edges incident to $x$ from $E$. We call this \emph{removing a vertex}. $G - e$ is the graph $(X, E \setminus \{e, \overline{e}\})$. We call this \emph{deleting an edge}.  $G/e$ is the graph which is the result of removing $e$ and $\overline{e}$ from $E$, and of identifying $y$ with $z$. We call this \emph{contracting an edge}. 
\end{definition}

\subsection{Curve Complex}

\begin{definition} Let $S$ be a closed, orientable surface of genus $\geq 2$. Then the \emph{curve complex} of $S$ is the complex $\mathcal{C}_S$ whose vertices correspond to isotopy classes of essential, simple closed curves on $S$, and such that a collection of distinct vertices $v_0, \dots, v_n$ bound an $n$-simplex if representatives from each of the corresponding isotopy classes can be chosen to be simultaneously disjoint. In particular, we will be concerned with the $1$-skeleton, $\mathcal{C}_S^1$. The \emph{distance} between two vertices is the smallest number of edges in any path between the vertices in $\mathcal{C}_S^1$.
\end{definition}

Hempel \cite{Hem3MAVFCC} generalized the notions of reducibility/irreducibility and weak reducibility/strong irreducibility of Heegaard splittings by introducing the notion of distances of splittings. We will express the definition in the language of the curve complex. 

\begin{definition}
Let $(S, V, W)$ be a Heegaard splitting. Let $\DD_V$ be the subset of the curve complex corresponding to curves on $S$ which bound disks in $V$. Define $\DD_W$ similarly. These are called the \emph{disk sets of the handlebodies}. Then the \emph{(Hempel) distance} of the Heegaard splitting, denoted $d(V, W)$ or $d(S)$, is the distance in $\mathcal{C}_S$, $d(\DD_V, \DD_W)$.
\end{definition}

Johnson and Rubinstein generalized the notion of the mapping class group of a surface, and introduced the mapping class group of a Heegaard splitting:

\begin{definition} \cite{JohRubMCGHS}
Let $(S, V, W)$ be a Heegaard splitting of a manifold $M$. $Aut(S, V, W)$ is the set of automorphisms of $M$ that send $S$ to itself. The \emph{mapping class group of $(S, V, W)$}, $\MCG(S, V, W)$, is the group of the connected components of $Aut(S, V, W)$.
\end{definition}

As we are going to analyze $\mathcal{C}_S$ coarsely, we will define the following.

\begin{definition}\
\begin{itemize}
\item We say a map between two metric spaces, $f: (X, d_X) \to (Y, d_Y)$ is a \emph{quasi-isometric embedding} for some $k \geq 1$, $c \geq 0$, if for every $x_1, x_2 \in X$,
$$\frac{1}{k} \left( d_X(x_1, x_2) - c \right) \leq d_Y(f(x_1), f(x_2)) \leq kd_X(x_1, x_2) + c.$$

\item We say that a quasi-isometric embedding, $f: (X, d_X) \to (Y, d_Y)$, is \emph{$D$-dense} if there exists a $D > 0$ such that for every $y \in Y$, there is an $x \in X$ such that $d_Y(y, f(x)) < D$.

If $f: (X, d_X) \to (Y, d_Y)$ is a quasi-isometric embedding which is also $D$-dense, then we say that $f$ is a \emph{quasi-isometry} and we say that $(X, d_X)$ and $(Y, d_Y)$ are \emph{quasi-isometric}.
\end{itemize}
\end{definition} 

For $S$ a (closed) surface of genus $\geq 2$, we know \cite{IvaACCTS} every isometry of $\mathcal{C}_S$ is induced by a homeomorphism of $S$. In other words, $\Isom(\mathcal{C}_S) \cong \MCG(S)$. Looking at the mapping class group coarsely, we can replace isometries with quasi-isometries. So the following definition is justified.

\begin{definition} The \emph{coarse mapping class group} of a surface $S$, $\CMCG(S)$, is the group of quasi-isometries of $\mathcal{C}_S$, modulo the following relation. $f \sim g$ if there exists a bound $D$ such that for any $x \in \mathcal{C}_S$, $d(f(x), g(x)) \leq D$. This is also known as $QI(\mathcal{C}_S)$.
\end{definition}

The implicit generalization of Johnson and Rubinstein's mapping class group of a Heegaard splitting to coarse geometry is then the following:
 
\begin{definition}
The \emph{coarse mapping class group} of a splitting $(S, V, W)$, $\CMCG(S, V, W)$, is the group of quasi-isometries of $\mathcal{C}_S$, modulo $\sim$, with the additional condition that each quasi-isometry (class) move each handlebody disk set $\mathcal{D}_V$ and $\mathcal{D}_W$ only a bounded distance.
\end{definition} 

\section{Whitehead Graphs}
\label{section:graphs}

One can construct a graph from a Heegaard diagram as follows (see, for instance, \cite{StaWGH}). Begin with the diagram $(S, \partial \DD, \partial \EE)$. Let $\widehat{S} = S|\partial \EE$. Then $\widehat{S}$ is a $2g$-times punctured sphere, where $g = \mbox{genus}(S) = |\EE|$. Notice that $\partial \DD|\partial \EE$ is a collection of arcs properly embedded in $\widehat{S}$. We will call the \emph{Whitehead graph} the graph $G$ whose vertices correspond to the boundary components of $\widehat{S}$, and whose vertices are joined by an edge if there is a component of $\partial \DD |\partial \EE$ with one endpoint on each of the curves of $\partial \EE$ corresponding to these vertices. Considering $\partial{\widehat{S}} \cup \partial \DD|\partial \EE \subset \widehat{S}$, there is a natural projection map $\pi :  \partial{\widehat{S}} \cup (\partial \DD|\partial \EE) \to G$. Call $x_i^{\pm}$ the two vertices arising from the disk $E_i \in \EE$. (Note: since $\widehat{S}$ is a punctured sphere, $\partial \DD|\partial \EE$ is embedded in a planar surface, so the graph $G$ is planar. It is also worth noting that this process is symmetric. We could have chosen to consider $S|\partial \DD$, though this may have resulted in a different graph.) 

\begin{remark} Compare to Section 11.1 of \cite{AbrSchlDHS}. In this notation, we take $\mathcal{L}$ to be $\partial \DD$, and $C$ to be $\partial \EE$. Our Lemma \ref{lem:prettypants} was proven independently by Abrams and Schleimer in [Lemma 11.8, \cite{AbrSchlDHS}].
\end{remark}

It is reasonable to expect that there is some connection between the topology of the splitting, and the combinatorics of this graph. The connection is not as strong as one might hope, since multiple diagrams can correspond to the same splitting. However, there are useful connections to be found. We will use the following definition.

\begin{definition}
A graph $G$ is \emph{$2$-connected} if $|X| > 2$, and $G - Y$ is connected for every set $Y \subset X$ with $|Y| < 2$.
\end{definition}

In other words, the graph is connected, and there do not exist any vertices such that removing the vertex disconnects the graph.

There is a relationship between the connectivity of the graph, and the reducibility of the Heegaard splitting.

\begin{lemma} 
\label{lem:2conn} 
If a Heegaard splitting $(S, V, W)$ is irreducible, and a pair of cut systems for $V$ and $W$ intersect minimally, then the associated Whitehead graph is $2$-connected.
\end{lemma}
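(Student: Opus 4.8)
The plan is to prove the contrapositive in sharpened form: if the Whitehead graph $G=(X,E)$ is not $2$-connected, then either $(S,V,W)$ is reducible or the pair $(\DD,\EE)$ of cut systems does not intersect minimally -- in either case contradicting a hypothesis. Since $\abs{X}=2g$ with $g=\mathrm{genus}(S)\ge 2$, the requirement $\abs{X}>2$ is automatic, so failure of $2$-connectivity means $G$ is disconnected or has a cut vertex. The recurring tool is the following: if $c$ is a simple closed curve on $S$ disjoint from $\partial\DD$, then $c$ bounds a disk in $V$. Indeed $c$ then lies in $S|\partial\DD$, a subsurface of the boundary $2$-sphere of the ball $V|\DD$, hence is null-homotopic in $V|\DD$ and so in $V$; being embedded and $V$ irreducible, it bounds an embedded disk in $V$ by Dehn's lemma. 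Symmetrically for $W$ and $\partial\EE$. Consequently any essential simple closed curve on $S$ disjoint from $\partial\DD\cup\partial\EE$ exhibits $(S,V,W)$ as reducible, and each case below will produce such a curve, or (in the cut vertex case) a handle slide lowering $\abs{\partial\DD\cap\partial\EE}$.

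First, if some vertex $x_i^{\pm}$ has degree $0$, then no arc of $\partial\DD|\partial\EE$ meets its boundary circle, so $\partial\DD\cap\partial E_i=\emptyset$; then $\partial E_i$ bounds a disk in $V$, bounds $E_i$ in $W$, and is essential since $g\ge 2$, so $(S,V,W)$ is reducible. Next, suppose $G$ is disconnected with no isolated vertex. Partition its components into two nonempty families; since $\widehat S=S|\partial\EE$ is a punctured sphere, a standard separation argument yields a simple closed curve $c\subset\widehat S$ disjoint from all arcs of $\partial\DD|\partial\EE$ and separating the two families of boundary circles. Having no isolated vertices, each side of $c$ in $\widehat S$ contains at least two boundary circles, so $c$ is essential and not boundary-parallel in $\widehat S$; moreover $c$ cannot bound a disk in $S$, for otherwise, capping the curves of $\partial\EE$ lying inside that disk and passing to an innermost one, some $\partial E_j$ would bound a disk in $\widehat S$, hence in $S$ -- impossible since $\partial E_j$ is essential. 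Thus $c$ is essential on $S$ and disjoint from $\partial\DD\cup\partial\EE$, so $(S,V,W)$ is reducible.

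Finally, suppose $G$ has a cut vertex $v=x_i^{\pm}$, so $G-v$ is disconnected and $v$ is adjacent to each of its components. Planarity of $\widehat S$ forces the arc-endpoints on the boundary circle $D_i^{\pm}$ to occur in one cyclic block per component of $G-v$; running an arc through the two gaps flanking the block of a chosen component gives an embedded arc $\omega\subset\widehat S$ with both endpoints on $D_i^{\pm}$, disjoint from all arcs of $\partial\DD|\partial\EE$ -- a \emph{wave} for $E_i$ avoiding $\partial\DD$. Surgering $E_i$ along $\omega$ yields disks bounded by $\mu_1\cup\omega$ and $\mu_2\cup\omega'$, where $\mu_1\cup\mu_2=\partial E_i$ is the splitting of $\partial E_i$ by $\partial\omega$ and $\omega'$ is a pushoff of $\omega$. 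Because $\omega$ is based at one side of $\partial E_i$, it cuts off a disk in the annulus $\partial T|\partial E_i$, where $T=W|(\EE\setminus\{E_i\})$ is a solid torus; hence one surgered disk is $\partial$-parallel in $T$ while the other, $E_i'$, is a meridian disk of $T$, so $\EE':=(\EE\setminus\{E_i\})\cup\{E_i'\}$ is again a cut system for $W$, with $\partial E_i'$ disjoint from $\partial(\EE\setminus\{E_i\})$. As $\omega$ misses $\partial\DD$ while both $\mu_1$ and $\mu_2$ carry endpoints of arcs of $\partial\DD|\partial\EE$ (the blocks on both sides of $\partial\omega$ are nonempty because $v$ is a genuine cut vertex with no isolated neighbors), we get $\abs{\partial\DD\cap\partial E_i'}<\abs{\partial\DD\cap\partial E_i}$, hence $\abs{\partial\DD\cap\partial\EE'}<\abs{\partial\DD\cap\partial\EE}$, contradicting minimality.

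The step I expect to require the most care is the cut vertex case: extracting the arc $\omega$ from the combinatorics of the cut vertex (where planarity of $\widehat S$ is the crucial ingredient), and then checking carefully that the wave surgery genuinely produces a new cut system for $W$ with strictly fewer intersections with $\partial\DD$ -- in particular ruling out the degenerate behaviors of such surgeries -- which is where minimality (least intersection over all pairs of cut systems) is genuinely used. The disconnected case is softer; its one delicate point is promoting ``$c$ essential in $\widehat S$'' to ``$c$ essential in $S$'', and irreducibility alone suffices there.
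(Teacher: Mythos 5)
Your proof is essentially the paper's proof, organized the same way: disconnectedness of $G$ produces an essential simple closed curve disjoint from $\partial\DD\cup\partial\EE$, hence a reducing sphere, contradicting irreducibility; a cut vertex $v=x_i^{\pm}$ produces a modification of $E_i$ that strictly lowers $\abs{\partial\DD\cap\partial E_i}$ without changing other intersection counts, contradicting minimality. Where you differ, you are more careful: you peel off the isolated-vertex case so that the separating-curve argument is clean, and you justify in detail why the separating curve is essential in $S$ and not merely in $\widehat S$ (the paper states this in one sentence; your innermost-disk argument is the right way to fill it in). In the cut-vertex case the paper phrases the move as re-identifying $x_i^{\pm}$ to a punctured torus and ``sliding everything in $R$ over the $1$-handle corresponding to $E_i$,'' while you perform a wave surgery on $E_i$ along $\omega$; these are two descriptions of essentially the same intersection-reducing operation, and your version has the advantage of making explicit that one surgered disk is a meridian of $T=W|(\EE\setminus\{E_i\})$ so that $\EE'$ is still a cut system.

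One spot you should tighten, and which you correctly flagged as the delicate step: the ``cyclic block'' picture on $\pi^{-1}(v)$ implicitly ignores loop edges at $v$, i.e.\ arcs of $\partial\DD|\partial\EE$ with both endpoints on $\pi^{-1}(v)$. These arcs do not appear in $G-v$, so they do not respect the block decomposition, and an $\omega$ threaded between two blocks could a priori cross one of them. You need to either show such loop arcs can be absorbed (choose $\omega$ outermost among arcs with both endpoints on $\pi^{-1}(v)$ that separate the chosen component from $x_i^{\mp}$), or rule out the offending configurations via minimality/innermost-bigon arguments. The paper's own proof is equally terse on the analogous point (its $\sigma$ is only required to be disjoint from $\pi^{-1}(G-x_i^+)$, so it too could meet arcs incident to $x_i^+$), so this is a shared refinement rather than a gap unique to your write-up. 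Aside from that, the argument is correct.
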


\begin{proof}
Choose disk sets $\DD$ and $\EE$ for $V$ and $W$ respectively so as to minimize the number of intersections between $\DD$ and $\EE$. If the Whitehead graph $G$ is not connected, then $\pi^{-1}(G)$ is not connected, and there exists a curve $\sigma \subset \widehat{S}$, disjoint from $\pi^{-1}(G)$, that separates two components of $\pi^{-1}(G)$. Since $\sigma$ separates two non-empty components of $\pi^{-1}(G)$, it is an essential curve in $S$. But it is disjoint from all boundary components and arcs of $\DD|\EE$. So $\sigma$ is an essential curve in $S = \partial W$, disjoint from all the curves $\EE$, and thus bounds a disk in $W$. But $\sigma$ is also an essential curve on $S = \partial V$, disjoint from all the curves $\DD$, and thus bounds a disk in $V$. This contradicts the assumption that $(S, V, W)$ was irreducible.

Now, if there were a vertex, say $x_i^+$, such that $G - x_i^+$ was disconnected, then there would exist a properly embedded arc $\sigma \subset \widehat{S}$, disjoint from $\pi^{-1}(G - x_i^+)$, with both endpoints on $x_i^+$, such that $\sigma$ separates $\widehat{S}$ into two non-empty components. Let $R$ be the component that does not contain $x_i^-$. Then consider re-identifying $\pi^{-1}(x_i^+)$ and $\pi^{-1}(x_i^-)$ along $E_i$. The result is a $2(|\EE| - 1)$-punctured torus. We can slide everything in $R$ along the $1$-handle corresponding to $E_i$ (which gets replaced by $E'$. The effect of this slide is to reduce the number intersections of $\DD$ with $E_i$, without changing any other intersections. But this contradicts our assumption of minimality of $\DD \cap \EE$.

\end{proof}
 
We have a useful lemma from Przytyscki :

\begin{lemma}[Przytyscki \cite{PrzGL}]
\label{lem:graphlemma}
Let $e$ be any edge of a $2$-connected graph $G$. Then

\begin{enumerate}
\item if $G$ has more than one edge, then $G - e$ and $G/e$ are connected. 
\item either $G/e$ or $G - e$ is $2$-connected. 
\end{enumerate}
\end{lemma}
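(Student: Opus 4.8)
The plan is to treat this as a purely combinatorial statement, allowing $G$ to have loops and parallel edges (as Whitehead graphs do), and to use the unwinding of the definition: $G$ is $2$-connected iff it is connected, has $|X|>2$ vertices, and has no cut vertex (a vertex whose deletion disconnects the graph). Part (1) is the warm-up. That $G/e$ is connected is immediate and uses nothing beyond connectedness of $G$, since a path in $G$ projects to a walk in $G/e$. For $G-e$, if $e$ is a loop there is nothing to prove; otherwise write $e=(x,y)$ with $x\neq y$ and suppose $G-e$ is disconnected, so $e$ is a bridge and $G-e$ has exactly two components $C_x\ni x$ and $C_y\ni y$. Since $|X|>2$, one of them, say $C_x$, contains a second vertex; then no edge of $G$ joins $C_x\setminus\{x\}$ to $C_y$ except $e$ itself, which is incident to $x$, so $x$ is a cut vertex of $G$, a contradiction. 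Hence $G-e$ is connected.

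For part (2) I would split on the type of $e$. If $e$ is a loop, then $G-e$ has the same vertex set, connectivity, and cut vertices as $G$, so it is $2$-connected. If $e=(x,y)$ with $x\neq y$ but there is a parallel edge $f=(x,y)$, then $G-e$ is $2$-connected: it is connected by part (1), and a cut vertex $v$ of $G-e$ would make $(G-e)-v$ disconnected while $G-v=((G-e)-v)+e$ is connected, which (adding one edge lowers the component count by at most one) forces $(G-e)-v$ to have exactly two components separated by $e$, so $v\notin\{x,y\}$ and $x,y$ lie in different components — impossible, since $f$ still joins them in $(G-e)-v$. The remaining, and main, case is that $e=(x,y)$ is a non-loop edge with no parallel edge and $G-e$ is not $2$-connected; here I must show that $G/e$ is $2$-connected.

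In that case $G-e$ is connected (part (1)), hence has a cut vertex $v$, and as above $(G-e)-v$ splits into exactly two components $A\ni x$ and $B\ni y$ joined only by $e$, with $v\notin\{x,y\}$; thus $X=A\sqcup B\sqcup\{v\}$ and every edge of $G$ other than $e$ lies inside $A\cup\{v\}$ or inside $B\cup\{v\}$. Let $z$ be the vertex of $G/e$ formed by identifying $x$ and $y$. Then $G/e$ is connected by part (1), and it has no cut vertex: for $u\neq z$ we have $(G/e)-u=(G-u)/e$, which is connected because $G-u$ is ($2$-connectedness of $G$) and contraction preserves connectedness; and $(G/e)-z=G-\{x,y\}$ is connected because in the connected graph $G-x$ every vertex of $A\setminus\{x\}$ reaches $v$ within $A\cup\{v\}$ and every vertex of $B\setminus\{y\}$ reaches $v$ within $B\cup\{v\}$, so deleting $y$ as well leaves everything joined through $v$. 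Having $|X|-1$ vertices, $G/e$ is therefore $2$-connected. (An alternative route to part (2) is the matroid fact that for a connected matroid and any element $e$ with at least one other element, $M\setminus e$ or $M/e$ is connected, applied to the cycle matroid of $G$; but the direct argument above is more self-contained.)

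The genuinely delicate point — more bookkeeping than substance — is the small case $|X|=3$: then $G/e$ has only two vertices and fails to be $2$-connected under the stated convention (for instance, if $G$ is a triangle and $e$ an edge, $G-e$ is a path and $G/e$ is a doubled edge with a loop, neither $2$-connected), so the argument really needs $|X|\geq 4$, and the $|X|=3$ situation must be excluded in the applications (the Whitehead graphs here begin with $2g\geq 4$ vertices) or handled by reading ``$2$-connected'' for a $2$-vertex multigraph as ``connected and cut-vertex-free.'' The other things to watch are the recurring ``one extra edge lowers the component count by at most one'' step and the bookkeeping of loops and parallel edges under contraction; none is hard, but each must be made explicit since $G$ is a multigraph.
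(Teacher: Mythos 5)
The paper does not prove this lemma at all; it is stated with a citation to Przytycki's \emph{Graphs and links}, so there is no in-paper argument to compare against. Your proposal supplies a self-contained combinatorial proof, and the core arguments are correct. Part (1) is handled cleanly via the bridge/cut-vertex dichotomy and the fact that contraction preserves connectedness. In part (2), the split into the loop case, the parallel-edge case, and the main case (non-loop, no parallel edge, $G-e$ not $2$-connected) is sound; the key decomposition $X=A\sqcup B\sqcup\{v\}$ induced by a cut vertex $v$ of $G-e$, together with the identities $(G/e)-u=(G-u)/e$ for $u\neq z$ and $(G/e)-z=G-\{x,y\}$, does establish that $G/e$ has no cut vertex. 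One small expository wrinkle: in the parallel-edge case you write $G-v=((G-e)-v)+e$ before concluding $v\notin\{x,y\}$, whereas that identity already presupposes $v\notin\{x,y\}$; the correct order is to rule out $v\in\{x,y\}$ first (since then $(G-e)-v=G-v$, which is connected) and only then invoke the identity.

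Your closing caveat about $|X|=3$ is not just bookkeeping: under the paper's stated convention (which requires $|X|>2$ for $2$-connectedness), the triangle $K_3$ is $2$-connected but for any edge $e$, $G-e$ is a path and $G/e$ has only two vertices, so part (2) fails outright. The paper does not flag this, and it quietly affects Corollary \ref{cor:graphcor} as well (whose hypothesis ``more than two vertices'' should really be ``more than three,'' or the convention relaxed so that a $2$-vertex multigraph with $\geq 2$ parallel edges counts as $2$-connected). This has no bearing on Lemma \ref{lem:prettypants}, since the Whitehead graphs there have $2g\geq 4$ vertices, but you are right to isolate it, and your two suggested remedies (exclude $|X|=3$, or reread $2$-connectedness for $2$-vertex multigraphs) are both viable.
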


\begin{cor}
\label{cor:graphcor}
If $G$ is a $2$-connected graph with more than two vertices, then there exists an edge $e$ such that $G/e$ is $2$-connected.
\end{cor}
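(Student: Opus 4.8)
The plan is to induct on the number of edges of $G$, repeatedly applying Lemma \ref{lem:graphlemma} to strip off edges until an edge contraction keeps the graph $2$-connected. First I would fix an arbitrary edge $e$ of $G$ and invoke part (2) of Lemma \ref{lem:graphlemma}: either $G/e$ is $2$-connected, and then $e$ is the edge we want, or else $G-e$ is $2$-connected. In the latter case, $G-e$ has the same vertex set as $G$ (so it still has more than two vertices) but one fewer edge, so the inductive hypothesis yields an edge $e'$ of $G-e$ for which $(G-e)/e'$ is $2$-connected. The base of the induction is when $G$ already has the fewest edges a $2$-connected graph can have — that is, when $G$ is a cycle — and there contracting any edge produces a shorter cycle, which is again $2$-connected.

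The step that actually requires an argument is transferring such an $e'$ from $G-e$ back to $G$. Since $e'\neq e$, edge deletion and edge contraction commute, so $(G/e') - \bar e = (G-e)/e'$, where $\bar e$ is the image of $e$ after contracting $e'$ (possibly a loop, if $e$ and $e'$ were parallel — harmless in what follows). In other words, $G/e'$ is obtained from the $2$-connected graph $(G-e)/e'$ by adding a single extra edge. Adding an edge cannot destroy $2$-connectedness: the vertex set does not change, and for any vertex $x$, deleting $x$ from the larger graph leaves a graph that contains $\bigl((G-e)/e'\bigr)-x$, which is connected. Hence $G/e'$ is $2$-connected, and $e'$ is an edge of $G-e$ and therefore of $G$, completing the induction.

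I expect this transfer step to be the only subtlety; all the combinatorial substance is already contained in Lemma \ref{lem:graphlemma}. If one prefers to avoid induction, the same idea can be organized by choosing a $2$-connected spanning subgraph $H\subseteq G$ with the minimum number of edges: then $H-e$ fails to be $2$-connected for every edge $e$ of $H$, so Lemma \ref{lem:graphlemma}(2) forces $H/e$ to be $2$-connected, and $G/e$ is then $2$-connected by the same observation that re-adding the edges of $G$ not in $H$ preserves $2$-connectedness.
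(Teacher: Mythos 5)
Your argument is correct and follows essentially the same route as the paper's: both proofs iterate edge deletion using part (2) of Lemma \ref{lem:graphlemma}, and both turn on the \emph{transfer step} that a contractible edge of $G-e$ is also a contractible edge of $G$. The real value you add is that you actually prove that transfer step --- via the commutativity of contraction and deletion for distinct edges, $(G/e')-\bar e = (G-e)/e'$, together with the observation that adjoining an edge cannot destroy $2$-connectedness --- whereas the paper merely asserts ``If there were an edge $e$ of $G_1$ such that $G_1/e$ was $2$-connected, then $G/e$ would be $2$-connected.'' Your cycle base case is also cleaner than the paper's ``continue until $G_n$ has only two edges'' termination argument, and the closing reformulation via a minimal $2$-connected spanning subgraph $H$ is an elegant way to package the same idea without explicit induction. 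One caveat you share with the paper: under the paper's convention that $2$-connectedness requires $|X|>2$, a triangle has no contractible edge, so strictly the statement needs $G$ to have at least four vertices (or a relaxed convention for two-vertex multigraphs). This does not affect the application, since Whitehead graphs have an even number of vertices and the relevant base case in Lemma \ref{lem:prettypants} starts at four.
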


\begin{proof}
Suppose there is no edge which can be collapsed while preserving the property of $2$-connectedness. Then select any edge, call it $e_1$, and remove it, calling the resulting graph $G_1$. $G_1$ is $2$-connected. If there were an edge $e$ of $G_1$ such that $G_1/e$ was $2$-connected, then $G/e$ would be $2$-connected. So remove another edge, say $e_2$. Again, $G_2$ is $2$-connected, with no collapsible edges, for a collapsible edge of $G_2$ would have been a collapsible edge of $G$. Continue this process until $G_n$ has only two edges. If $G_n$ has more than three vertices, there is an immediate contradiction, as the graph cannot be connected. If $G_n$ has exactly three vertices, then it is either disconnected, or it is a line graph, which is not $2$-connected. This is, again, a contradiction. Therefore, there must be an edge $e$ of $G$ such that $G/e$ is $2$-connected.
\end{proof}

Combining these results gives us the following:
\begin{lemma}
\label{lem:prettypants}
Let $(S, V, W)$ be an irreducible Heegaard splitting of a $3$-manifold. Then there exists a pants decomposition ($\PP, \QQ)$ of the splitting which is full type.
\end{lemma}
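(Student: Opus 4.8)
The plan is to build the pants decomposition $(\PP, \QQ)$ of the splitting $(S, V, W)$ by an inductive procedure on the genus, using the Whitehead graph to guide the choice of curves on the $W$-side. Start with a Heegaard diagram $(S, \partial\DD, \partial\EE)$ in which $\DD$ and $\EE$ are cut systems realizing minimal intersection; by Lemma \ref{lem:2conn} the associated Whitehead graph $G$ is $2$-connected. Put $\QQ_0 = \partial\EE$ (a cut system, hence part of a pants decomposition of $W$). The key observation is the dictionary between the combinatorics of $G$ and the arcs of $\partial\DD$ in the pieces of $S|\partial\EE$: an edge of $G$ joining $x_i^{\pm}$ to $x_j^{\pm}$ records a subarc of $\partial\DD$ running between the two relevant copies of curves of $\partial\EE$, and such an arc is precisely what is needed for a curve of $\PP$ to traverse the seam of a pair of pants. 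So I want to enlarge $\QQ_0$ to a full pants decomposition $\QQ$ of $W$ in such a way that, for every pair of pants $P$ of $S|\QQ$, each of the three seams of $P$ is traversed by some curve of $\partial\DD$ — then I can isotope/complete $\partial\DD$ to a pants decomposition $\PP$ of $V$ (adding parallel copies and extra $V$-disks as needed, which does not destroy the seam-traversing arcs) and $(\PP,\QQ)$ will be full type by definition.

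The inductive step is where Corollary \ref{cor:graphcor} enters. As long as $G$ has more than two vertices, pick an edge $e$ with $G/e$ still $2$-connected (Corollary \ref{cor:graphcor}); geometrically, contracting $e$ corresponds to band-summing two of the curves of the current cut system along the subarc of $\partial\DD$ that $e$ represents, producing a new cut system $\EE'$ for $W$ with $|\EE'| = |\EE| - 1$, whose Whitehead graph is $G/e$ — still $2$-connected, and still realizing minimal intersection with (a representative of) $\DD$ by the same minimality argument as in Lemma \ref{lem:2conn}. The curve that got "used up" in the band sum, together with a small dual curve encircling the band, is exactly a pair of pants $P$ split off of $W$; the edge $e$ guarantees an arc of $\partial\DD$ traversing the seam of $P$ between the two old boundary components, and $2$-connectedness of $G$ (no cut vertex) is what supplies arcs of $\partial\DD$ traversing the other two seams of $P$ (an arc running "out of" $P$ on each of its boundary curves, which exists because neither endpoint-vertex of $e$ is a cut vertex). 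Recording $P$ and its boundary curves into $\QQ$, we recurse on $(G/e, \EE')$; after $\mathrm{genus}(S) - 1$ steps $G$ has two vertices and the remaining piece of $W$ is handled directly (it is a single pair of pants or, in the genus-one-piece degenerate case covered by the definition, dealt with by the remark in the definition of pants decomposition of a compression body).

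The main obstacle I anticipate is making the geometric translation of "$2$-connected" into "all three seams of every split-off pair of pants are traversed" fully rigorous — in particular verifying that contracting an edge really does correspond to a legitimate band sum along an embedded subarc of $\partial\DD$ (one must check the subarc can be chosen embedded and disjoint from the rest of $\DD$, which is where minimality of $|\DD \cap \EE|$ is used again), and that the absence of a cut vertex genuinely forces arcs of $\partial\DD$ hitting the other two boundary curves of $P$ from outside. A secondary technical point is the final assembly: completing the collection of $\partial\DD$-arcs and the curves $\QQ$ to honest pants decompositions $\PP$ of $V$ and $\QQ$ of $W$ while not losing any of the traversing arcs — this should follow because adding further essential disjoint curves only subdivides pairs of pants and a seam of a sub-pair-of-pants is covered by a seam or a concatenation of seams of the coarser decomposition, but it needs to be stated carefully. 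I would also remark that this argument was obtained independently by Abrams and Schleimer (Lemma 11.8 of \cite{AbrSchlDHS}), as already noted above.
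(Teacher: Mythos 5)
Your proposal follows the same route as the paper: start from a minimal-intersection Heegaard diagram, invoke Lemma \ref{lem:2conn} for $2$-connectedness of the Whitehead graph, use Corollary \ref{cor:graphcor} to find a contractible edge, realize the contraction geometrically as a band sum along a subarc of $\partial\DD$, let $2$-connectedness force all three seams of the split-off pair of pants to be traversed, iterate, and finally complete $\DD$ to $\PP$ arbitrarily. So the skeleton matches.

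Two bookkeeping slips are worth fixing before a careful write-up. First, the collection $\EE'$ you describe, with $\abs{\EE'} = \abs{\EE} - 1$ curves, cannot be a cut system for the genus-$g$ handlebody $W$ (a cut system must have exactly $g$ disks), and the Whitehead graph of a $(g-1)$-curve collection would not be $G/e$ in any case, since $S \mid \partial\EE'$ would no longer be a punctured sphere. As stated, the recursion on $(G/e, \EE')$ is therefore not well-founded. The paper avoids this by never changing the cut system: it retains all $g$ curves of $\partial\EE$, adds the band-sum curve $q_i$ to the growing collection $\QQ$ at each step, and treats the contracted graph $G/e_1/e_2/\cdots$ as a purely combinatorial object to which Corollary \ref{cor:graphcor} applies directly, so no geometric re-interpretation as a new Whitehead graph and no re-running of the minimality argument is needed. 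Second, the step count is off: starting from $2g$ vertices and contracting one edge per step, you reach the three-vertex triangle (the base case the paper actually uses) after $2g-3$ contractions, not $g-1$, consistent with adding $2g-3$ new curves to the $g$ curves of $\partial\EE$ to build the $3g-3$ curves of $\QQ$. Neither slip is fatal, and the secondary technical points you flag (embeddedness of the band arc; that extending $\DD$ to $\PP$ doesn't disturb the seam-traversing arcs, since traversal is a property of the pairs of pants of $\QQ$ and the fixed curves $\partial\DD$) are handled essentially as you anticipate.
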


\begin{proof}
Let $\DD$ and $\EE$ be cut systems as in lemma \ref{lem:2conn}, and $G$ be the associated Whitehead graph. We will extend $\DD$ and $\EE$ into pants decompositions with the desired properties, proceeding inductively on the number of vertices in $G$.

If there are two vertices, then $\DD$ and $\EE$ each consist of a single disk, and $V$ and $W$ are solid tori. Then by virtue of being irreducible, $\DD$ intersects $\EE$, and by our convention, these are pants decompositions which satisfy the conclusion of the lemma. This is really a special case, and is not the base for our induction.

\begin{figure}[tb]
\begin{center}
\includegraphics[width=2in]{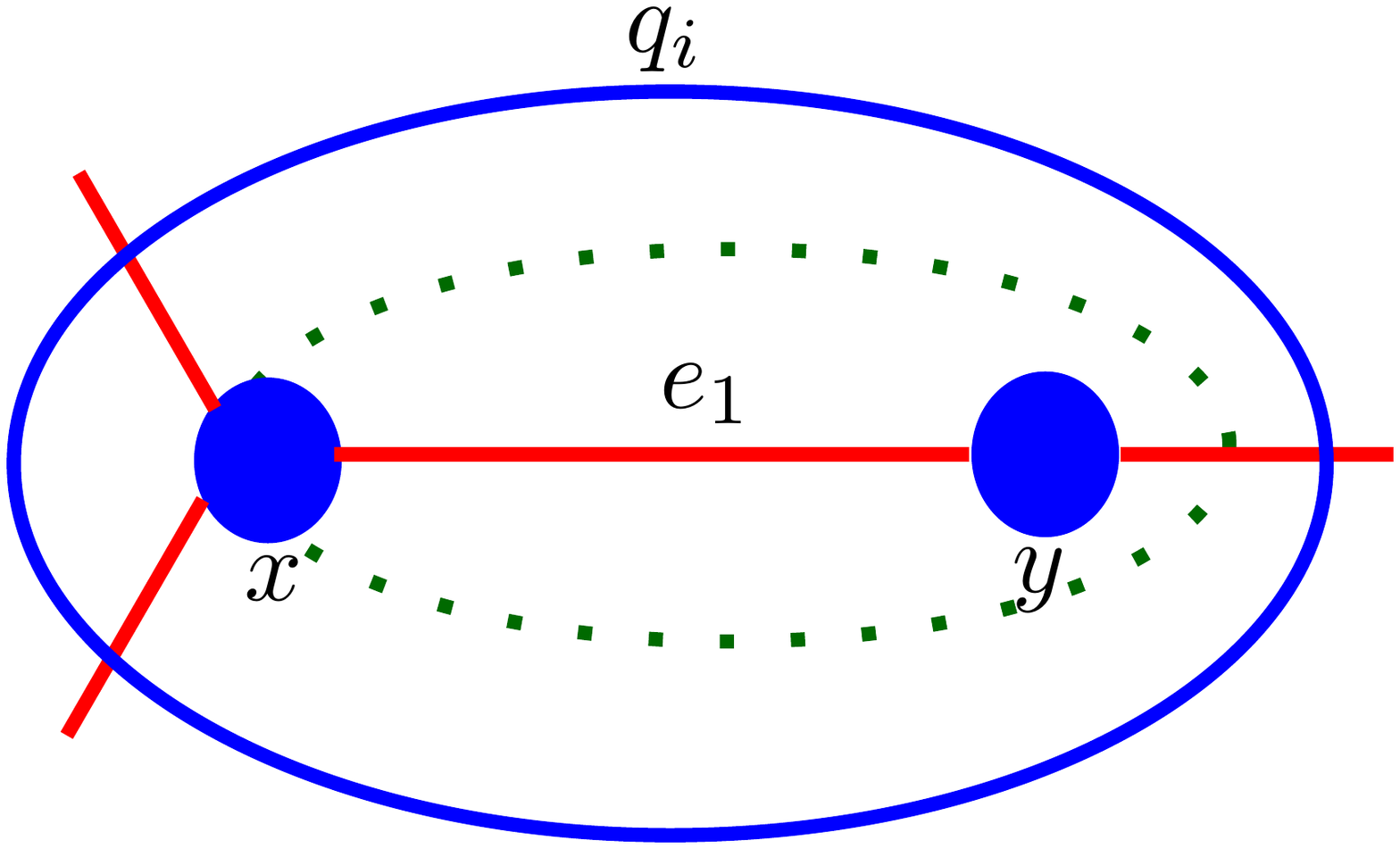}
\caption{Constructing $q_1 \in \QQ$.}
\label{figure:edge}
\end{center}
\end{figure}

Our base case is when $G$ has four vertices. By Corollary \ref{cor:graphcor}, there exists some edge, $e_1$, such that $G/e_1$ is $2$-connected. Suppose $e_1$ has endpoints $x$ and $y$. Pick an arc $\widetilde{e_1} \subset \widehat{S}$ of $\pi^{-1}(e_1)$. Let $q_1$ be a loop on $\widehat{S}$ that goes around $\pi^{-1}(x)$, $\pi^{-1}(y)$, and $\widetilde{e_1}$, cutting off a pair of pants $Q_1$ with boundaries corresponding to $\pi^{-1}(x)$, $\pi^{-1}(y)$, and $q_1$ (see Figure \ref{figure:edge}). That is, on the surface $S$, let $q_1$ be the band sum along $\widetilde{e_1}$ of the disks corresponding to $x$ and $y$. As the band sum of two disks, $q_1$ bounds a disk in $W$. $q_1$ will be a curve in our pants decomposition $\QQ$. 

Recall that edges of $G$ correspond to sub-arcs of the curves in $\DD$. Notice then that $\widetilde{e_1}$ traverses the seam of $Q_1$ between $\pi^{-1}(x)$ and $\pi^{-1}(y)$. Now, $q_1$ cannot separate the preimage of the graph, so at least one of the seams, say between $\pi^{-1}(x)$ and $q_1$ must be traversed by an arc of $\pi^{-1}(G)$. And finally, there must be an arc of $\pi^{-1}(G)$ which traverses the seam between $\pi^{-1}(y)$ and $q_1$, or else there would be an arc from $\pi^{-1}(x)$ to itself inside $Q_1$ which would contradict $2$-connectedness (see Figure \ref{figure:edge}). Thus, every seam of $Q_1$ is traversed by $\DD$.

Now, the effect on $\pi$ of collapsing the edge $e_1$ is to consider the entire pair of pants $Q_1$ to be mapped to a single vertex (see Figure \ref{figure:contract}). By lemma \ref{lem:graphlemma}, $G/e_1$ is $2$-connected, and has only $3$ vertices. But the only $2$-connected graph with $3$ vertices is a triangle. So the preimage of the remaining vertices and $q_1$ also form a pair of pants, and every seam is traversed by an arc of $\pi^{-1}(G/e_1)$, and thus by an arc of $\pi^{-1}(G)$.

\begin{figure}[tb]
\begin{center}
\includegraphics[width=2in]{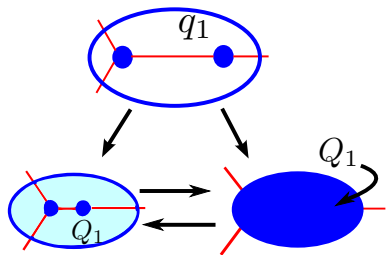}
\caption{Collapsing $e_1$.}
\label{figure:contract}
\end{center}
\end{figure}

Then, if $G$ has $2n$ vertices, similarly collapsing the graph along an edge guaranteed by Corollary \ref{cor:graphcor} reduces the number of vertices, and we conclude by induction. 

Thus, we can extend $\EE$ to a pants decomposition $\QQ$ of $W$ with the property that every seam of $\QQ$ is traversed by a curve of $\DD$. We then extend $\DD$ to a pants decomposition $\PP$ of $V$ in any way we like in order to finish the lemma.
\end{proof}

\section{Outline of Proof of Theorem \ref{theorem:Main Theorem}}
\label{section:outline}
In \cite{MinMorSchHDK}, Minsky, Moriah, and Schleimer prove that for any genus $g \geq 2$, and any distance $n$, there is a knot in $S^3$ and a genus $g$ Heegaard splitting of the knot complement with distance $> n$. Their method is to start with a standard Heegaard splitting $(S, V, W)$ for $S^3$, along with the standard pants decompositions for the two handlebodies. Then, they remove a core $K$ from $V$ which respects the pants decompositions, which changes the disk set for $V$ in a prescribed manner. They then use the pants decompositions to construct a particular pseudo-Anosov homeomorphism of $S$ which extends over $V$. Since the map extends over $V$, the image of the original $K$ is a new knot $K'$ in $S^3$. And since the map has particular properties, it pulls the disk sets for $(V \setminus n(K))$ and $W$ arbitrarily far apart under iteration, ensuring that $K'$ is a high distance knot.

The main lemma in their proof is the following:
\begin{lemma}[Lemma 2.1 of \cite{MinMorSchHDK}]
\label{lem:dis}
Suppose $X, Y \subset \mathcal{C}_S$, and let $\overline{X}, \overline{Y}$ denote their closures in $\mathcal{PML}(S)$. Let $\Phi$ be a pseudo-Anosov homeomorphism of $S$ with stable and unstable laminations $\lambda^{\pm}$. Assume that $\lambda^- \notin \overline{Y}$ and $\lambda^+ \notin \overline{X}$. Then $d(X, \Phi^n(Y)) \to \infty$ as $n \to \infty$.
\end{lemma}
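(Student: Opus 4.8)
The plan is to argue by contradiction, using Thurston's north--south dynamics of a pseudo-Anosov on $\mathcal{PML}(S)$ together with continuity of the geometric intersection pairing. Suppose the conclusion fails. Then there is a constant $M$, a sequence $n_k \to \infty$, and curves $x_k \in X$, $y_k \in Y$ with $d(x_k, \Phi^{n_k}(y_k)) \leq M$ in $\mathcal{C}_S$. Since $\mathcal{PML}(S)$ is compact, after passing to a subsequence we may assume $x_k \to \xi$ and $y_k \to \eta$ in $\mathcal{PML}(S)$, where $x_k, y_k$ are viewed in $\mathcal{PML}(S)$ via the inclusion $\mathcal{C}_S^0 \hookrightarrow \mathcal{PML}(S)$; then $\xi \in \overline{X}$ and $\eta \in \overline{Y}$, so the hypotheses give $\xi \neq \lambda^+$ and $\eta \neq \lambda^-$.

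Next I would invoke the fact that $\Phi$ acts on $\mathcal{PML}(S)$ with source--sink dynamics, the fixed points being $\lambda^{\pm}$: for every compact $K \subset \mathcal{PML}(S) \setminus \{\lambda^-\}$ and every neighborhood $U$ of $\lambda^+$ there is an $N$ with $\Phi^n(K) \subset U$ for all $n \geq N$. Taking $K$ to be a compact neighborhood of $\eta$ avoiding $\lambda^-$, and using $y_k \to \eta$ together with $n_k \to \infty$, this yields $\Phi^{n_k}(y_k) \to \lambda^+$ in $\mathcal{PML}(S)$. This is the step where the hypothesis $\lambda^- \notin \overline{Y}$ is used in an essential way: it keeps $\eta$ off the repelling fixed point, which is exactly what the dynamical statement requires.

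Now for each $k$ choose an edge-path $x_k = c_k^0, c_k^1, \dots, c_k^M = \Phi^{n_k}(y_k)$ in $\mathcal{C}_S^1$ of length at most $M$ (pad by repeating vertices to make it exactly $M$), so that consecutive curves $c_k^j, c_k^{j+1}$ are disjoint or equal, whence $i(c_k^j, c_k^{j+1}) = 0$. Passing to a further subsequence (only finitely many indices $j$ are involved), assume $c_k^j \to \xi^j$ in $\mathcal{PML}(S)$ for each $j$, with $\xi^0 = \xi$ and $\xi^M = \lambda^+$. Lifting the projective convergence to $\mathcal{ML}(S)$ by suitable normalizing scalars and using bilinearity and continuity of the intersection pairing $\mathcal{ML}(S) \times \mathcal{ML}(S) \to \mathbb{R}_{\geq 0}$, one obtains $i(\xi^j, \xi^{j+1}) = 0$ for every $j$. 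Finally, since $\lambda^+$ is minimal, filling, and uniquely ergodic (standard facts about pseudo-Anosov laminations), any nonzero measured lamination with zero intersection number with $\lambda^+$ is a positive multiple of $\lambda^+$; hence $\xi^M = \lambda^+$ forces $\xi^{M-1} = \lambda^+$, and inductively $\xi^0 = \lambda^+$. Thus $\lambda^+ = \xi \in \overline{X}$, contradicting $\lambda^+ \notin \overline{X}$.

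The routine parts are the compactness/subsequence bookkeeping and the observation that consecutive vertices of a path in $\mathcal{C}_S^1$ have zero geometric intersection number. The step requiring the most care — the main obstacle — is passing from projective convergence of the $c_k^j$ to the vanishing of $i(\xi^j,\xi^{j+1})$: one must choose the normalizing scalars coherently so that the rescaled simple closed curves converge in $\mathcal{ML}(S)$ and not merely projectively, and then appeal to continuity of $i$; this, combined with unique ergodicity of $\lambda^+$, is precisely what collapses the whole finite chain $\xi^0, \dots, \xi^M$ onto $\lambda^+$ and produces the contradiction.
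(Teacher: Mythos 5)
The paper does not give its own proof of this lemma: it is stated by citation to Lemma~2.1 of Minsky--Moriah--Schleimer and used as a black box. Your reconstruction --- compactness of $\mathcal{PML}(S)$, the locally uniform source--sink dynamics of $\Phi$ on $\mathcal{PML}(S)$, and the collapse of a bounded-length chain via continuity of $i(\cdot,\cdot)$ together with the fact that a filling, uniquely ergodic lamination is the only projective class with which it has zero intersection --- is correct and is essentially the argument given in that source.
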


They also make use of the following:
\begin{lemma}[Lemma 2.3 of \cite{MinMorSchHDK}]
\label{lem:lam}
No lamination in the closure of the disk set of a compression body traverses all the seams of a pants decomposition of that compression body.
\end{lemma}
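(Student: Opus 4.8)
The plan is to extract one topological fact about disk‑bounding curves and one elementary fact about measured laminations in pairs of pants, and then pass to the limit. Write $S=\partial_+V$; a curve or lamination traverses all the seams of $\PP$ exactly when, for each pair‑of‑pants component $Q$ of $\partial V|\PP$ with cuffs $c_1,c_2,c_3$, it contains a seam of each of the three types, and by the standard pants coordinates a measured lamination $\mu$ contains a $c_i$--$c_j$ seam of $Q$ if and only if $i(\mu,c_i)+i(\mu,c_j)>i(\mu,c_k)$, where $\{i,j,k\}=\{1,2,3\}$. Thus ``traverses all the seams of $\PP$'' is governed by finitely many strict triangle inequalities in the intersection numbers with the curves of $\PP$, and its negation by a corresponding non‑strict reverse inequality. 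The strategy is: (1) show a simple closed curve bounding a disk in $V$ violates one such inequality; (2) observe that this passes to $\mathcal{PML}(S)$‑limits; hence no lamination in $\overline{\DD_V}$ traverses all the seams of $\PP$. (For a handlebody of genus $\geq 2$, the case of interest, every component of $\partial V|\PP$ is a pair of pants; if there were no pair‑of‑pants component there would be no seams and nothing to prove.)

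For (1): the curves of $\PP$ bound a disjoint system $\Delta$ of compressing disks of $V$ such that every component of $V|\Delta$ meeting $\partial V$ in a pair of pants of $\partial V|\PP$ is a $3$‑ball, capped along its three cuffs by copies of disks of $\Delta$ (for a handlebody this is all of $V|\Delta$; otherwise there is also a collar $\partial_-V\times I$). Given a disk $D$ with $\partial D=d$, isotope so that $d$ is in minimal position with respect to $\PP$ and $D\cap\Delta$ is transverse, has no closed components, and is of minimal cardinality; if $D\cap\Delta=\emptyset$ then $d$ lies in a single component of $\partial V|\PP$ and traverses no seam at all. Otherwise an outermost arc $\beta$ of $D\cap\Delta$ cuts off a subdisk $D'$ with $D'\cap\Delta=\beta\subset\Delta_i$; by minimality the arc $\alpha:=\partial D'\setminus\beta$ is essential in the component $Q$ of $\partial V|\PP$ containing it, and $Q$ must be a pair of pants (a subdisk trapped in the collar $\partial_-V\times I$ would be $\partial$‑parallel, hence removable, contradicting minimality). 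Since $\partial\alpha=\partial\beta$ lies on a single copy of $\partial\Delta_i$, both endpoints of $\alpha$ lie on the same cuff of $Q$, say $c_1$, so $d$ traverses a wave in $Q$ based at $c_1$. That wave cuts $Q$ into two annuli, one containing $c_2$, the other $c_3$; every other arc of $d\cap Q$ is disjoint from the wave, hence lies in one of the two annuli, hence has both endpoints on $c_1\cup c_2$ or on $c_1\cup c_3$. Thus $d\cap Q$ contains no $c_2$--$c_3$ seam, which is exactly $i(d,c_2)+i(d,c_3)\le i(d,c_1)$.

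For (2): take disk‑bounding curves $d_n$ with $[d_n]\to[L]$ in $\mathcal{PML}(S)$, so $t_n d_n\to L$ in $\mathcal{ML}(S)$ for suitable $t_n>0$. By (1), each $d_n$ satisfies an inequality $i(d_n,c^n_2)+i(d_n,c^n_3)\le i(d_n,c^n_1)$ for some pair‑of‑pants component $Q^n$ of $\partial V|\PP$ with cuffs labeled $c^n_1,c^n_2,c^n_3$; there are only finitely many such labeled triples, so along a subsequence the triple is a fixed $(Q,c_1,c_2,c_3)$. Multiplying by $t_n$ and using continuity of geometric intersection number on $\mathcal{ML}(S)$, the limit satisfies $i(L,c_2)+i(L,c_3)\le i(L,c_1)$; by the pants‑coordinate criterion, $L$ contains no $c_2$--$c_3$ seam of $Q$, so $L$ does not traverse all the seams of $\PP$. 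Since $L\in\overline{\DD_V}$ was arbitrary, the lemma follows.

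The \textbf{main obstacle} is (1): everything downstream is soft, but one must see that the disk structure forces a wave and then carry the routine innermost/outermost bookkeeping through — in particular, in the non‑handlebody case, checking that an outermost subdisk cannot be absorbed into the $\partial_-V\times I$ collar. A small but essential point behind the pants‑coordinate criterion used throughout is that a measured lamination may carry closed leaves or spirals along curves of $\PP$, but these contribute nothing to the intersection numbers $i(\cdot,c_k)$ nor to the arc counts in a pair of pants, so the triangle‑inequality description of seam‑traversal holds without further qualification.
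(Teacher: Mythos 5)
Note first that the paper does not prove this lemma; it is quoted from Minsky--Moriah--Schleimer \cite{MinMorSchHDK} and used as a black box, so there is no internal proof to compare against. Your outline --- an outermost-arc argument on a compressing disk to produce a wave, conversion of that wave into a non-strict triangle inequality among intersection numbers with the cuffs (a closed condition), then passage to the limit in $\mathcal{ML}(S)$ using continuity of $i(\cdot,\cdot)$ together with the finiteness of labeled pants --- is the standard route, and it is essentially the argument of \cite{MinMorSchHDK}.

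There is, however, one genuine (if easily repaired) error. The ``pants-coordinate criterion'' as you state it, that $\mu$ contains a $c_i$--$c_j$ seam of $Q$ \emph{if and only if} $i(\mu,c_i)+i(\mu,c_j)>i(\mu,c_k)$, fails in the converse direction as soon as $\mu$ has a wave in $Q$. For instance, if $\mu\cap Q$ consists of a single $c_1$-wave, then $i(\mu,c_1)=2$ while $i(\mu,c_2)=i(\mu,c_3)=0$, so $i(\mu,c_1)+i(\mu,c_2)>i(\mu,c_3)$ although there is no $c_1$--$c_3$ arc. Only the forward implication (seam present $\Rightarrow$ strict inequality) is true. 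This matters in your step (1): ``$d\cap Q$ contains no $c_2$--$c_3$ seam'' is \emph{not} by itself equivalent to $i(d,c_2)+i(d,c_3)\le i(d,c_1)$. The fix is short: the $c_1$-wave you produced excludes not only $c_2$--$c_3$ seams but also $c_2$- and $c_3$-waves in $Q$, because each of these arc types must cross a $c_1$-wave, which is impossible for the pairwise-disjoint essential arcs of $d\cap Q$. Writing $a_{mn}$ for the number of $c_m$--$c_n$ arcs of $d\cap Q$, this gives $a_{22}=a_{33}=a_{23}=0$, hence $i(d,c_1)=2a_{11}+a_{12}+a_{13}\ge a_{12}+a_{13}=i(d,c_2)+i(d,c_3)$ directly. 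Your step (2) uses only the correct direction of the criterion (non-strict reverse inequality $\Rightarrow$ seam absent), so it stands, and with the small repair above the argument is complete.
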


The authors of \cite{MinMorSchHDK} use pants decompositions of $S^3$ which are concrete, so they can construct a pseudo-Anosov map whose stable lamination traverses all the seams of (is full type with respect to) both pants decompositions. Thus, by lemma \ref{lem:lam}, they conclude that the stable and unstable laminations of the pseudo-Anosov map are not in the closures of the disk sets of $V \setminus n(K)$ and $W$, so by Lemma \ref{lem:dis}, the map moves the disk sets further and further apart.

They point out that the same technique will not work in an arbitrary $3$-manifold. Consider the connected sum of several copies of $S^1 \times S^2$. The Heegaard splitting will have identical disk sets, so finding a pseudo-Anosov homeomorphism that extends to one of the handlebodies cannot increase distance, since the resulting Heegaard splitting will always be reducible. The remedy they suggest is to stabilize the Heegaard splitting once. This is the technique we use, for even a single stabilization provides enough asymmetry between the disk sets to construct the desired map. 

The heart of \cite{MinMorSchHDK} is actually constructing the pseudo-Anosov map in question. This is done using a train track on the Heegaard surface to construct a curve $a$ which 1) bounds a disk in $V$, and 2) is of full type with respect to both pants decompositions. Again, this is possible in $S^3$ because the pants decompositions are explicit. The difficulty for an arbitrary $3$-manifold is that there is no canonical choice of pants decompositions. We will overcome this obstacle in the following way:

\subsubsection*{Warm-Up} Using Lemma \ref{lem:prettypants}, we can construct pants decompositions $(\PP_i$, $\QQ_i)$ of full type for an \emph{irreducible} Heegaard splitting, along with a curve $\gamma_i$ which is of full type with respect to $\PP_i$ and $\QQ_i$. 

\subsubsection*{General Case} Given any arbitrary Heegaard splitting of a closed $3$-manifold, we will decompose the splitting into irreducible summands. We will then use the pants decompositions $\PP_i, \QQ_i$, and curves $\gamma_i$ on each irreducible summand provided by the Warm-Up to construct new pants decompositions $\PP$ and $\QQ$, and a single curve $\gamma$ on the original Heegaard surface which traverses all the seams of those pants decompositions. (Note: $(\PP$, $\QQ)$ will not be of full type, but $\gamma$ will be full type with respect to both $\PP$ and $\QQ$.)

\vspace{.25cm}

Then, given any Heegaard splitting, we can use these pants decompositions and the curve $\gamma$ in the construction of a pseudo-Anosov homeomorphism with the desired properties. We stabilize the splitting, and remove a core from one of the handlebodies. In the stabilized splitting, we will use the curve $\gamma$ to construct a meridian $a$ with the same properties as the meridian $a$ from \cite{MinMorSchHDK}, and proceed in the same fashion. (Notice that in the case $M = S^3$, we will recover the results of \cite{MinMorSchHDK} by way of a different curve.)

\section{Pants Decompositions and Seam-Traversing Curves}
\label{sec:PantsandCurve}
We already know from Lemma \ref{lem:prettypants} that there exist pants decompositions of full type for an irreducible Heegaard splitting. If a splitting is not irreducible however, we cannot hope to find pants decompositions that are so nice. All we need, however, are two pants decompositions and a \emph{curve} which is of full type with respect to both. We use the full type pants decomposition to find such curves on irreducible splittings. We then use the \emph{curves} on irreducible splittings to construct such full type \emph{curves} on reducible splittings.
 
\subsubsection*{Warm-Up} First we consider irreducible Heegaard splittings.

\begin{lemma}\label{lem:irreducible} Given any irreducible Heegaard splitting $(S_i, V_i, W_i)$ of a closed $3$-manifold, there exist pants decompositions $\PP_i$ and $\QQ_i$ for $V_i$ and $W_i$ respectively, and a curve $\gamma_i$ in $S_i$ that traversses every seam of both pants decompositions.
\end{lemma}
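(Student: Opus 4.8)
The plan is to build on Lemma \ref{lem:prettypants}. That lemma already gives us, for an irreducible splitting $(S_i, V_i, W_i)$, a full type pants decomposition $(\PP_i, \QQ_i)$: that is, $\PP_i$ is a pants decomposition of $V_i$, $\QQ_i$ is a pants decomposition of $W_i$, and every seam of every pair of pants of $S_i|\QQ_i$ is traversed by some curve of $\PP_i$. The issue is that being full type only tells us that the \emph{set} $\PP_i$ collectively hits all the seams of $\QQ_i$, and it says nothing about the seams of $\PP_i$. What we want is a single curve $\gamma_i$ that hits every seam of both $\PP_i$ and $\QQ_i$ at once. So the first step is to symmetrize: apply Lemma \ref{lem:prettypants} once to get $(\PP_i, \QQ_i)$ with $\PP_i$ full type with respect to $\QQ_i$, and then I would like to also arrange that $\QQ_i$ is full type with respect to $\PP_i$. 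One route is to re-run the Whitehead-graph/edge-contraction argument of Lemma \ref{lem:prettypants} with the roles of $\DD$ and $\EE$ swapped, taking care to do the two constructions compatibly; another is simply to extend $\DD$ to $\PP_i$ after $\QQ_i$ is fixed, again using the 2-connectedness of the dual Whitehead graph built from $\QQ_i$, so that each seam of $\PP_i$ is traversed by a curve of $\QQ_i$.

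Once we have a pants decomposition $(\PP_i,\QQ_i)$ such that each seam of $\QQ_i$ is traversed by a curve of $\PP_i$ and (symmetrically) each seam of $\PP_i$ is traversed by a curve of $\QQ_i$, the second and main step is to amalgamate the finitely many curves of $\PP_i\cup\QQ_i$ that do the seam-traversing into a single connected curve $\gamma_i$ without losing any of the traversals. The idea is a band-sum / resolution argument: take the union of the relevant curves, which is a multicurve hitting every seam of both decompositions, and then connect its components one pair at a time by banding along short arcs, resolving intersections so that at each stage the result is still an embedded curve and still crosses each seam at least once. To see that a band sum of two curves still traverses a given seam, note that a curve traverses a seam $\sigma$ of a pair of pants $P$ iff, after isotoping to hit $\partial P$ minimally, it has an essential arc component of the seam's isotopy type in $P$; banding in an arc disjoint from a neighborhood of that arc, and routing the band through the complementary regions, preserves that component up to isotopy. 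A cleaner packaging: pick an essential arc realizing each seam we must hit, take a regular neighborhood of the union of all these arcs together with (parts of) $\PP_i$ and $\QQ_i$, and let $\gamma_i$ be an appropriate boundary component — arranged to be essential and to run through every seam. I would present whichever of these gives the least bookkeeping.

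The step I expect to be the main obstacle is exactly the amalgamation: guaranteeing that the banding operations can be performed simultaneously and consistently so that the final \emph{single} curve $\gamma_i$ still minimally intersects each pair of pants of $S_i|\PP_i$ and of $S_i|\QQ_i$ in a way that realizes every seam. The danger is that a band sum introduces inessential or wave-type arcs, or that isotoping $\gamma_i$ to minimal position with respect to $\PP_i$ destroys an arc we needed for $\QQ_i$ (since minimal position is taken with respect to each decomposition separately, and the decompositions intersect each other). Handling this likely requires choosing the banding arcs to lie in the complement of $\PP_i\cup\QQ_i$ away from the seam-arcs, and arguing that the resulting curve is already in minimal position with respect to both decompositions because there are no obvious bigons. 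An alternative that sidesteps the minimality subtleties is to phrase the conclusion in terms of a fixed efficient position and track intersection numbers directly. In any case, the topological content is elementary; it is the simultaneous-minimality bookkeeping that needs care, and that is where I would spend the bulk of the argument.
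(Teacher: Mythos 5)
Your proposal diverges substantially from the paper's argument, and the divergence is exactly where the gap lies. The paper does not symmetrize the full type property and does not band together the curves of $\PP_i \cup \QQ_i$. Instead, it runs Lemma~\ref{lem:prettypants} once (to get $\QQ_i$ with every seam traversed by a curve of $\DD_i \subset \PP_i$), extends $\DD_i$ to the standard pants decomposition $\PP_i$, and then invokes the explicit train track $\tau_{\PP_i}$ from Section~3.1 of \cite{MinMorSchHDK}. Any simple closed curve $\gamma_i$ carried by $\tau_{\PP_i}$ with weights $\geq 2$ on every branch simultaneously (a) runs roughly parallel to every curve of $\PP_i$, hence inherits from $\PP_i$ the traversal of every seam of $\QQ_i$, and (b) traverses every seam of $\PP_i$, because $\tau_{\PP_i}$ has branches crossing those seams by design. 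Being carried by a train track also handles minimal position for free, which is the problem you rightly flag as the crux. (The genus~$1$ case is handled separately by an explicit slope computation, which your proposal omits entirely.)

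The concrete gap in your plan is the amalgamation step. The curves of $\PP_i$ and $\QQ_i$ are not disjoint --- a pants curve of $\PP_i$ generally crosses many pants curves of $\QQ_i$ --- so ``band-summing the components of $\PP_i \cup \QQ_i$'' is not a defined operation; you would first have to resolve all the crossings, and each resolution changes the isotopy class in a way you would need to track. Even after producing some connected embedded curve, you must argue it is essential and in minimal position with respect to \emph{both} decompositions simultaneously, and that no seam-arc is lost in the isotopies that achieve minimal position. Your suggested fix (choose bands away from seam-arcs, check for bigons) does not address the resolutions of the $\PP_i$--$\QQ_i$ crossings at all, and ``take an appropriate boundary component of a regular neighborhood'' is not a construction until you say which component and why it works. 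Your symmetrization idea (running the Whitehead-graph argument in both directions off the same minimal cut systems $\DD_i, \EE_i$) is plausible, but it is also unnecessary in the paper's route and does not rescue the banding step. If you want to pursue your line, the missing ingredient is precisely a train-track (or other combinatorial) model that packages minimal position automatically --- which is what the paper borrows from \cite{MinMorSchHDK}.
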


\begin{proof}
For $(S_i,V_i,W_i)$, construct a curve $\gamma_i$ in the following way:  

\begin{enumerate}
\item  If $(S_i, V_i,W_i)$ is a genus 1 splitting given by a $(p,q)$ curve:  

Choose $\gamma_i$ to be a $(-q,p)$ curve, unless $p=0$, in which case choose, say, $(-1,2)$.

\item If $(S_i, V_i,W_i)$ has genus $ \geq 2$:

\begin{itemize}
\item Start with a minimal Heegaard diagram $(S_i, \partial\DD_i, \partial\EE_i)$ for $(S_i, V_i, W_i)$. Construct $\mathcal{Q}_i$ as in Lemma \ref{lem:prettypants}, and extend $\DD_i$ to the standard pants decomposition $\mathcal{P}_i$ of $V_i$ (see Figure \ref{figure:StandardPants}). By construction, every seam of $\mathcal{Q}_i$ will be traversed by the curves of $\mathcal{P}_i$.

\item Choose a curve $\gamma_i$ carried by the train tracks $\tau_{\mathcal{P}_i}$, shown in Figure \ref{figure:tau}, which has weights of at least $2$ on every branch. This curve $\gamma_i$ exists by the argument in [Section 3.1, \cite{MinMorSchHDK}]. Note that since $\gamma_i$ is carried by $\tau_{\mathcal{P}_i}$ with weights $\geq 2$ on every branch, $\tau_{\mathcal{P}_i}$ contains all curves of $\mathcal{P}_i$, and every seam of $\mathcal{Q}_i$ is traversed by the curves of $\mathcal{P}_i$, it follows that all the seams of $\mathcal{Q}_i$ are traversed by $\gamma_i$. Further, by construction of $\tau_{{\PP}_i}$, $\gamma_i$ also traverses all the seams of $\PP_i$ and $\gamma$ is of full type with respect to $(\PP, \QQ)$
\end{itemize}

\end{enumerate}

\begin{figure}[tb]
\begin{center}
\includegraphics[width=3in]{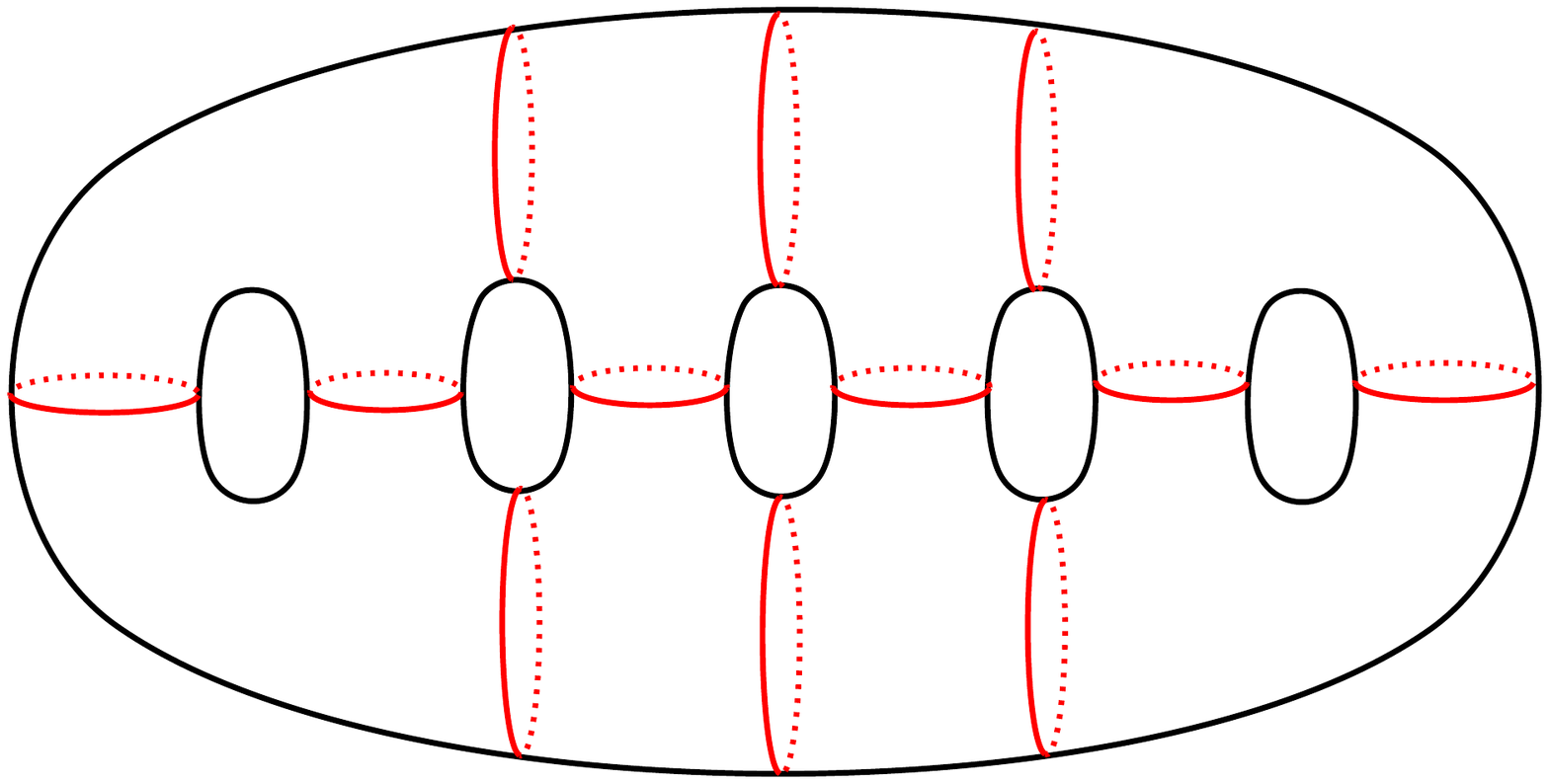}
\caption{The curves $\PP_i$.}
\label{figure:StandardPants}
\end{center}
\end{figure}

\begin{figure}[tb]
\begin{center}
\includegraphics[width=3in]{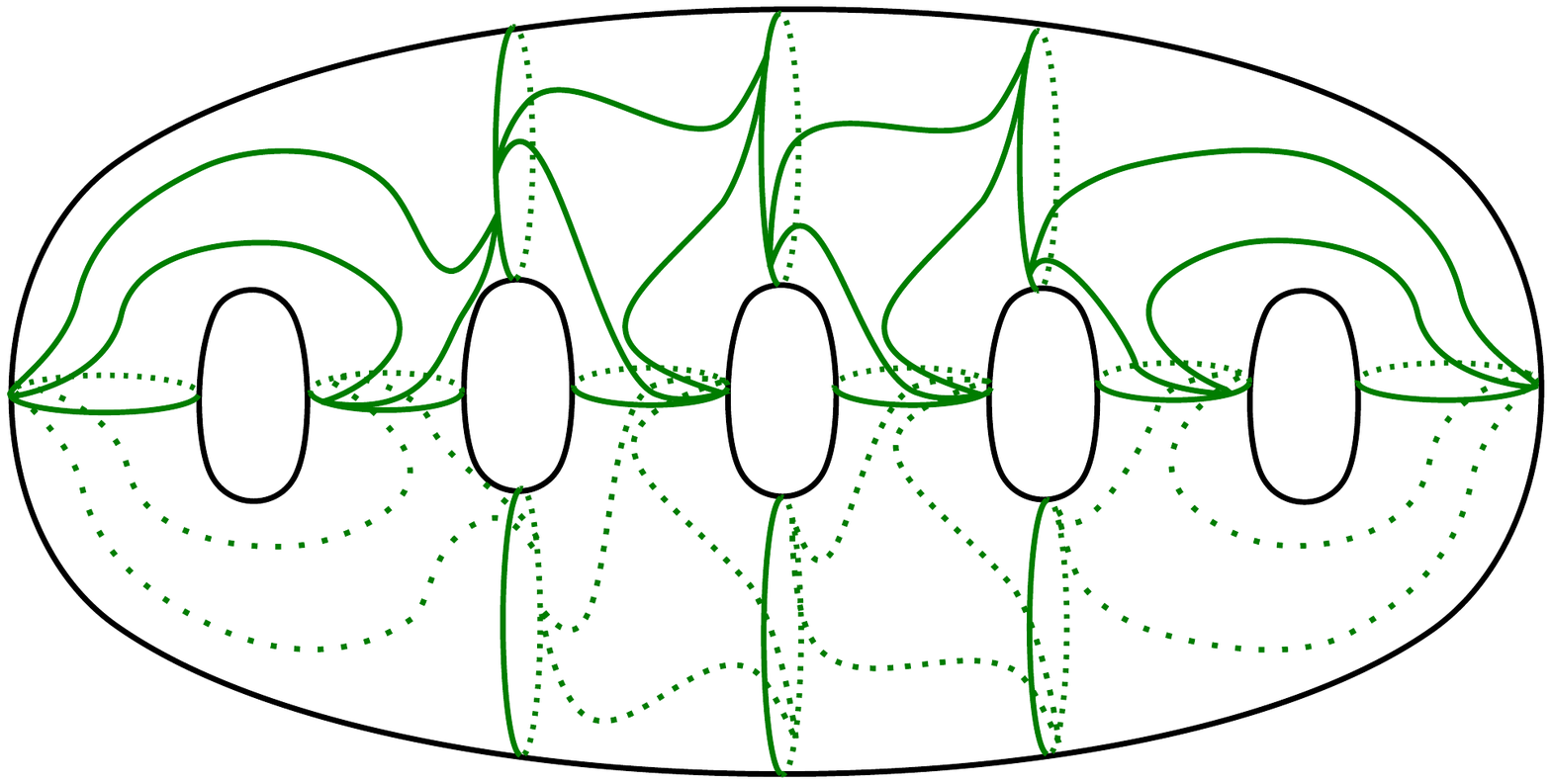}
\caption{The train track $\tau_{\PP_i}$.}
\label{figure:tau}
\end{center}
\end{figure}

\end{proof}

\subsubsection*{General Case} We then consider reducible Heegaard splittings.

 \begin{lemma}
\label{lem:curve}
Given any Heegaard splitting $(S, V, W)$ of a closed $3$-manifold, there exist pants decompositions $\PP$ and $\QQ$ for $V$ and $W$ respectively, and a curve $\gamma$ in $S$  of full type with respect to these pants deocompositions.
\end{lemma}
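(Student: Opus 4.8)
# Proof Proposal for Lemma \ref{lem:curve}

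The plan is to reduce the reducible case to the irreducible case handled in Lemma \ref{lem:irreducible} by decomposing the Heegaard splitting into irreducible summands, building the data $(\PP_i, \QQ_i, \gamma_i)$ on each summand, and then carefully reassembling this data across the connect-sum spheres. First I would invoke the Haken–Casson–Gordon machinery (or the standard fact that any Heegaard splitting is a connected sum of irreducible Heegaard splittings and copies of the genus-one splitting of $S^1 \times S^2$) to write $(S, V, W) = (S_1, V_1, W_1) \# \cdots \# (S_k, V_k, W_k)$, where each summand is irreducible. Geometrically, $S$ is obtained from the $S_i$ by removing a disk from each and gluing along the resulting boundary circles in a cyclic (or tree-like) pattern; each gluing circle $c_j$ is a reducing sphere intersection curve that bounds disks in both $V$ and $W$.

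Next I would transport the pants decompositions. On each summand $S_i$ apply Lemma \ref{lem:irreducible} to get $\PP_i$, $\QQ_i$, and $\gamma_i$. The key point is that in the proof of Lemma \ref{lem:irreducible} (and Lemma \ref{lem:prettypants}) one has freedom in how $\PP_i$, $\QQ_i$ are completed; I would arrange that near the disk being removed from $S_i$, the curve $\gamma_i$ runs across in a controlled parallel band, and that the reducing curve $c_j$ can be included as (or made disjoint from and compatible with) curves of $\PP$ and $\QQ$. Then set $\PP = \bigcup_i \PP_i \cup \{c_j\}$ and $\QQ = \bigcup_i \QQ_i \cup \{c_j\}$, suitably completed: since each $c_j$ bounds disks on both sides, these are legitimate (partial) pants data for $V$ and $W$ that one extends to full pants decompositions. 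The pairs of pants of $S | \PP$ are then, up to the pants adjacent to the $c_j$, exactly the pants of the $S_i | \PP_i$.

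The construction of $\gamma$ is the heart of the argument. I would take the arcs $\gamma_i \cap (S_i \setminus \text{disk})$ and band-sum them together across the connect-sum regions to produce a single simple closed curve $\gamma$ on $S$. One must do this so that (a) $\gamma$ is connected — achievable by routing the band sums around the tree/cycle of summands — and (b) $\gamma$ still traverses every seam of every pair of pants in $S | \PP$ and $S | \QQ$. For the pants lying inside a summand $S_i$, property (b) is inherited from $\gamma_i$ traversing all seams there, provided the band-summing operation only adds arcs and does not allow isotopies that remove essential intersections; I would verify that the bands can be pushed into the thin connect-sum necks and that the new pants near the $c_j$ have all their seams traversed as well (this may require a local adjustment of the band, or choosing $\gamma_i$ to enter the neck region already traversing the relevant seams). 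The main obstacle I anticipate is precisely this compatibility bookkeeping: ensuring that gluing the $\gamma_i$ into one curve neither destroys the seam-traversing property on the old pants nor leaves the new pants near the reducing spheres with an untraversed seam — in other words, controlling minimal position of $\gamma$ with respect to $\PP \cup \QQ$ globally while only having local control on each summand. The genus-one $S^1 \times S^2$ summands need separate small-case attention since there the notion of "seam" degenerates, but the explicit choice in case (1) of Lemma \ref{lem:irreducible} is designed exactly to handle this.
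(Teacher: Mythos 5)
Your high-level plan matches the paper's proof: decompose $(S,V,W)$ along a maximal family of separating reducing spheres into irreducible summands $(S_i, V_i, W_i)$, apply Lemma~\ref{lem:irreducible} to get $(\PP_i, \QQ_i, \gamma_i)$ on each, then band-sum the $\gamma_i$ across the connect-sum regions and enlarge the pants decompositions. You correctly identify the crux --- ``ensuring that gluing the $\gamma_i$ into one curve neither destroys the seam-traversing property on the old pants nor leaves the new pants near the reducing spheres with an untraversed seam'' --- but you leave this step unresolved (``this may require a local adjustment of the band\ldots''), and that is precisely where the content of the lemma lives.

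The paper resolves it with a specific local choice that you should supply. The connect-sum point $z_j$ is taken in a complementary region of $S_j \mid (\PP_j \cup \QQ_j \cup \gamma_j)$ that is \emph{adjacent to a segment} $\sigma = \sigma(\gamma_j, p_j^1, p_j^2)$ of $\gamma_j$ running between two distinct curves $p_j^1, p_j^2$ of $\PP_j$; hence $z_j$ lies in a common pair of pants $P_j$ of $\PP_j$ and $Q_j$ of $\QQ_j$. One completes the pants decomposition on the connect-summed surface by adding the circle $\delta_j = \partial n(z_j)$ together with the uniquely determined curve $p_j^*$ parallel to $p_j^1$ in $P_j$ but not in $P_j \setminus n(z_j)$ (and the analogous $q_j^*$ in $Q_j$); these new curves inherit the disk-bounding property from $p_j^1$ and $q_j^1$. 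Band-summing $\gamma_1$ to $\gamma_2$ along arcs from $z_j$ to $\sigma$ changes $\gamma_1 \cup \gamma_2$ only inside the pants meeting $\delta$, so no bigons with $\PP \cup \QQ$ are created; by the Bigon Criterion $\gamma$ is in minimal position, and the seam-traversal for both the old and the new pairs of pants can then be checked directly. Without pinning down $z_j$ adjacent to a $\PP_j$-seam of $\gamma_j$ and without the parallel closing curves $p_j^*, q_j^*$, the seam-count in the new pants near the reducing sphere does not obviously go through, so your proposal as written has a genuine gap at exactly the point you flagged.
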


\begin{proof}
Let $(S, V, W)$ be a Heegaard splitting of a closed $3$-manifold $M$. If the splitting is reducible, then take a maximal collection of separating, reducing spheres $\{F_1, \dots, F_{l-1}\}$ for the splitting, so that $M = (S_1, V_1, W_1) \#_{F_1} (S_2, V_2, W_2) \#_{F_2} \dots \#_{F_{l-1}} (S_l, V_l, W_l)$, where $(S_i, V_i, W_i)$ are irreducible splittings of prime $3$-manifolds for $1 \leq i \leq k$, and are genus $1$ splittings of $S^3$ or $S^2 \times S^1$ for $k+1 \leq i \leq l$. By Lemma \ref{lem:irreducible} we can construct pants decompositions $\PP_i$ and $\QQ_i$, and a curve $\gamma_i$ on each summand $S_i$ individually. We now show that it is possible to patch them together to create our desired curve $\gamma$ on the entire surface $S$. We will do this by carefully taking the connect sum $(S,V,W)=(S_1,V_1, W_1) \# (S_2,V_2,W_2)\# \cdots \#(S_n,V_n,W_n)$ in such a way that we have control over the naturally induced pants decompositions.

For simplicity, we will suppose there are only two summands, as the general case is simply a repetition of this process. 

So for $j = 1,2$, we have $M_j = (S_j, V_j, W_j)$, and a curve $\gamma_j$ which is full type with respect to $(\PP_j, \QQ_j)$. 

Let $z_j$ be a point in $S_j|(\PP_j \cup \QQ_j \cup \gamma_j)$, in a component adjacent to a segment $\sigma = \sigma(\gamma_j, p_j^1, p_j^2)$ of $\gamma_j$ connecting two different curves $p_j^1$ and $p_j^2$ of $\PP_j$. Then $z_j$ is in the intersection of two pairs of pants, say $P_j$ and $Q_j$, with $p_j^1$ and $p_j^2$ being two boundary components of $P_j$. Let $\delta_j = \partial n(z_j)$. We are going to connect sum $S_1$ to $S_2$ by identifying $n(z_1)$ with $n(z_2)$. But we must extend $\PP_1 \cup \PP_2$ and $\QQ_1 \cup \QQ_2$ in order to obtain pants decompositions of the new surface $S = S_1 \# S_2$. 

To extend $\PP_j$, let $p^*_j$ be the unique curve in $P_j$ which is parallel to $p_j^1$ in $P_j$, but not in $P_j \setminus n(z_j)$. See Figure \ref{figure:connectsum}. (Notice that $p_j^*$ bounds a disk in $V_j$ since $p_j^1$ did.) To extend $\QQ_j$, observe that since $\gamma_j$ traverses every seam of $\QQ_j$, $z_j$ is in either a rectangle or a hexagon of $Q_j | \gamma_j$ (see Figure \ref{figure:recorhex}). The component of $\gamma_j \cap Q_j$ containing $\sigma \cap Q_j$ connects two boundary components of $Q_j$, say $q_j^1$ and $q_j^2$. To extend $\QQ_j$, let $q^*_j$ be the unique curve in $Q_j$ which is parallel to $q_j^1$ in $Q_j$, but not in $Q_j \setminus n(z_j)$. (Notice again that $q_j^*$ bounds a disk in $W_j$ since $q_j^1$ did.)  See Figure \ref{figure:connectsum}, replacing $p$'s with $q$'s.  

Now, let ${\PP} = {\PP}_1 \cup {\PP}_2  \cup p^*_1 \cup p^*_2 \cup (\delta_1=\delta_2)$, and ${\QQ} = {\QQ}_1 \cup {\QQ}_2  \cup q^*_1 \cup q^*_2 \cup(\delta_1=\delta_2)$.

\begin{figure}[tb]
\begin{center}
\includegraphics[width=4in]{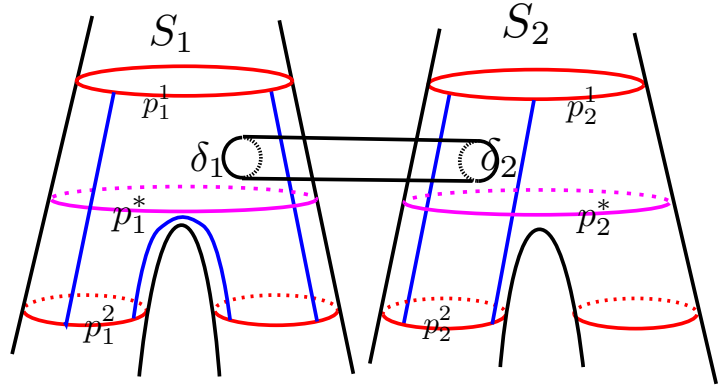}
\caption{Carefully connect summing.}
\label{figure:connectsum}
\end{center}
\end{figure}

\begin{figure}[tb]
\begin{center}
\includegraphics[width=2in]{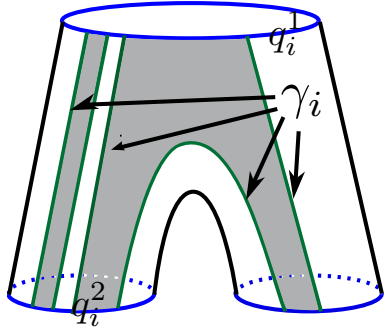}
\caption{$z_i$ is either in a rectangular or hexagonal region.}
\label{figure:recorhex}
\end{center}
\end{figure}

\begin{figure}[tb]
\begin{center}
\includegraphics[width=3in]{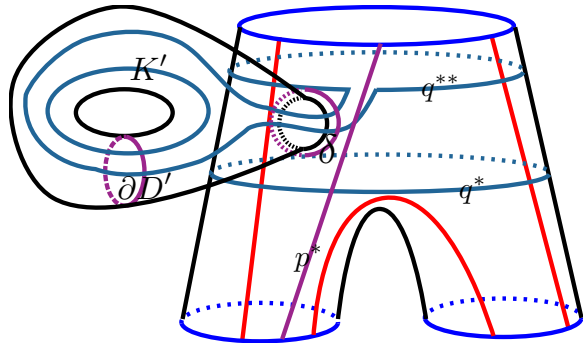}
\caption{The appended pants decompositions of $V$ and $W$.}
\label{figure:otherappendedpants}
\end{center}
\end{figure}

Since $z_j$ was chosen to be adjacent to $\sigma$, there is a path $v_j$ connecting $z_j$ to $\sigma$ whose interior does not intersect $\gamma_j$. To build $\gamma$, connect sum $\gamma_1$ to $\gamma_2$ along $v_1 \cup v_2$. Since the only changes to $\gamma_1 \cup \gamma_2$ happen inside the pairs of pants containing $\delta$, there are no bigons formed by $\gamma$ and $\PP \cup \QQ$. Thus, by the Bigon Criterion (see \cite{FarMarPMCG}), $\gamma$ intersects $\PP \cup \QQ$ minimally in its isotopy class. And we can verify directly that $\gamma$ traverses all the seams of $\PP$ and $\QQ$.

If there are more than two summands, we start as above with $M_1$ and $M_2$.  We then repeat the process using $M_1 \# M_2$ as the first manifold and $M_3$ as the second. Continue in this manner until $M = (S,V,W)$ is recovered, along with pants decompositions ${\PP}$ and ${\QQ}$, and a curve $\gamma$, which is of full type with respect to $\PP$ and $\QQ$.

\end{proof}

\section{Proof of Theorem \ref{theorem:Main Theorem}}
\label{sec:proof}
\begin{proof}
Let $(S, V, W)$ be a Heegaard splitting of a given closed $3$-manifold $M$, and let $n > 0$. By Lemma \ref{lem:curve}, there exist pants decompositions $\PP$ and $\QQ$ of $V$ and $W$, respectively, and a curve $\gamma$ which is of full type with respect to $\PP$ and $\QQ$.

Now we stabilize the splitting of $M$ once in a manner that allows us to control an extended pants decomposition of the stabilized Heegaard splitting.  The process of stabilizing and extending the pants decomposition is very similar to that of the connect summing done above.  

Again, let $z$ be a point in $S | (\PP \cup \QQ \cup \gamma)$  in a component adjacent to a segment $\sigma = \sigma(\gamma, p_1, p_2)$ of $\gamma$ connecting two different curves $p_1$ and $p_2$ of $\PP$. Then $z$ is in the intersection of two pairs of pants, say $P$ and $Q$, with $p_1$ and $p_2$ being two boundary components of $P$. Let $\Delta = \partial n(z)$. 

Since $z$ was chosen to be adjacent to $\sigma$, there is a path $v$ connecting $z$ to $\sigma$ whose interior does not intersect $\gamma$. 

Let $x, y$ be two points in the interior of $n(z)$, and join these by an arc $\alpha$ properly embedded in $W$, and isotopic to an arc $\beta \subset int(n(z))$. Then attach a 1-handle along $\alpha$, so that $\alpha \cup \beta=K$ is a core of the resulting handlebody, $V'$. Let $W' = \overline{ M - V' }$, and $S' = \partial V'$. Let $D'$ be the co-core of the 1-handle.

To extend $\PP$, let $p^*$ be the unique curve in $P$ which is parallel to $p_1$ in $S$, but not in $S'$. See Figure \ref{figure:otherappendedpants}. 

Since $\gamma$ traverses every seam of $\QQ$, $z$ is in either a rectangle or a hexagon of $Q | \gamma$ (see Figure \ref{figure:recorhex}). The component of $\gamma \cap Q$ containing $\sigma \cap Q$ connects two boundary components of $Q$, say $q_1$ and $q_2$. Let $w$ be a path in $P \cap Q$ connecting $z$ to $q_1$ whose interior does not intersect $\gamma$. 

To extend $\QQ$, first let $q^*$ be the unique curve in $Q$ which is parallel to $q_1$ in $S$, but not in $S'$. Let $K'$ be a curve on $S'$ parallel to $K$, bounding a disk in $W'$. Then, let $w'$ be an extension of the path $w$ into the stabilized part of $S'$ so that $w'$ meets $K'$, and let $q^{**}$ be the band sum of $q_1$ and $K'$ along $w'$ (see Figure \ref{figure:otherappendedpants}). 

Let ${\PP'} = {\PP} \cup \Delta \cup p^*$, and ${\QQ'} = {\QQ} \cup q^* \cup q^{**} \cup K'$. (Note that $\PP'$ is a pants decomposition for $V' \setminus n(K)$, not for $V'$.)

We now modify the curve $\gamma$ in $S$ constructed above to a new curve $a$ in $S'$ with three important properties:
\begin{itemize}
\item $a$ traverses every seam of $\PP'$.
\item $a$ traverses every seam of $\QQ'$.
\item $a$ bounds a disk in $V'$.
\end{itemize}

To do so, we begin with $\gamma$. Notice that $\gamma$ must traverse the seams (without loss of generality) $q^*q_1$ and $p^*p_1$. Now, take a product neighborhood of $v$, as above. The boundary of this neighborhood is a rectangle, made up of $v\times\{0\}$, $v\times\{1\}$, a segment of $\sigma$($\gamma$, $v\times\{0\}$ $v\times\{1\}$)=$\sigma_1$, and a segment of $\sigma$($\Delta$, $v\times\{0\}$ , $v\times\{1\}$).  Call the arc $\gamma'$= $v\times\{0\}$ $\cup$ $v\times\{1\}$ $\cup$ $\gamma \setminus \sigma_1$.  Extend each endpoint of $\gamma'$ slightly past $\Delta$ into the stabilized portion of $S'$.  Now the arc $\gamma'$ traverses the seams $p_1\delta$, $p^*\delta$, $q^*q_2$, and $q^*q_3$.  Take the band connect sum of two copies of a $D'$ along $\gamma'$.  The boundary of the resulting disk is the desired $a$. See Figure \ref{figure:makinga}. The curve $a$ now traverses all the previous seams, as well as the remaining seams of $\QQ'$: $q_1q^{**}$, $q_1K'$, $q^*q^{**}$, $q^*K'$, and both seams between $q^{**}$ and $K'$. See Figure \ref{figure:allseams}.

\begin{figure}[tb]
\begin{center}
\includegraphics[width=4in]{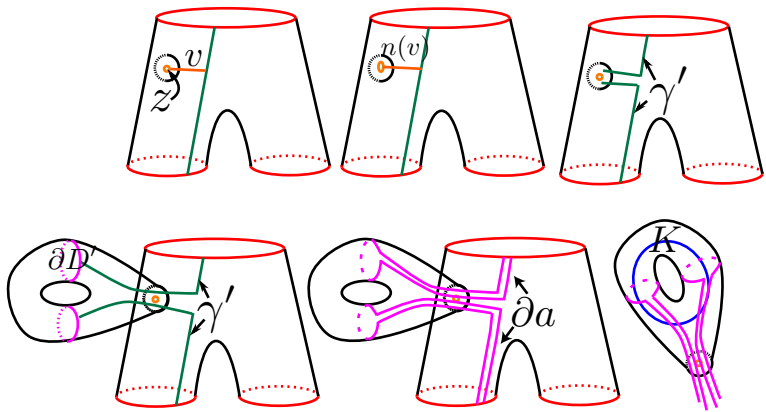}
\caption{Making the curve $\gamma'$ by modifying $\gamma$, and then band summing two copies of $D'$ along $\gamma'$ to make the meridian $a$.}
\label{figure:makinga}
\end{center}
\end{figure}

\begin{figure}[tb]
\begin{center}
\includegraphics[width=4in]{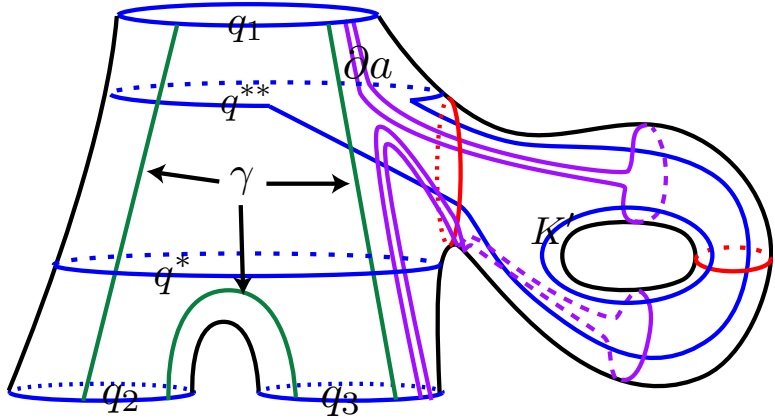}
\caption{All the seams of $\QQ$ are crossed by $a$.}
\label{figure:allseams}
\end{center}
\end{figure}

The proof now proceeds exactly as in \cite{MinMorSchHDK}. Choose two meridians $b, c \in \DD_{V'}$ so that together $b$ and $c$ fill $S'$, (see, e.g., Kobayashi \cite{KobPAHEORHS3}, Proof of Lemma 2.2). Let $\tau_a, \tau_b, \tau_c$ denote Dehn twists about $a, b,$ and $c$ respectively. Let $\Phi_0 = \tau_b \circ \tau_c^{-1}$. By Thurston's construction, \cite{ThuGDDS}, $\Phi_0$ is a pseudo-Anosov homeomorphism. Call its stable and unstable laminations $\lambda_0^{\pm}$. Then, define $\Phi_N = \tau_a^N \circ \Phi_0 \circ \tau_a^{-N}$. Because $a, b,$ and $c$ are all meridians, $\Phi_N$ extends over $V'$. Further, the stable and unstable laminations $\lambda_N^{\pm}$ of $\Phi_N$ are just $\tau_a^N(\lambda_0^{\pm})$. Now, $\lambda_0^{\pm}$ fills $S'$, so $a$ must intersect it, and thus the laminations $\lambda_N^{\pm}$ converge to $[a]$ in $\mathcal{S'}$ as $N \to \infty$. Hence, eventually both laminations are of full type with respect to $\PP$ and $\QQ$. Take $\Phi = \Phi_N$ for such a large $N$ that this occurs, and take $\lambda^{\pm} = \lambda_N^{\pm}$. Then $\Phi$ is a pseudo-Anosov homeomorphism of $S'$ which extends over $V'$, and whose stable and unstable laminations $\lambda^{\pm}$ are of full type with respect to $\PP$ and $\QQ$. 

Thus, by Lemma \ref{lem:lam}, $\lambda^{-} \not \in \overline{\DD_{V \setminus n(K)}}$ and $\lambda^+ \not \in \overline{\DD_W}$. And, by Lemma \ref{lem:dis}, $d(\Phi^n(\DD_{V \setminus n(K)}), \DD_W) \to \infty$ as $n \to \infty$, which proves Theorem \ref{theorem:Main Theorem}.

\end{proof}

\section{Corollaries and Discussion}
\label{section:corollaries}
\subsection{Surfaces}

Manifolds with high distance Heegaard splittings have been shown to possess many nice properties.  Being able to produce knots whose complements have high distance splittings means being able to produce knot complements with these properties. For example:  

\begin{cor}
Let $M=(S,V,W)$ be a Heegaard splitting of a closed, orientable $3$-manifold.  Fix $n>0$.   After one stabilization, it is possible to find a core of the new splitting whose exterior in $M$ has no closed essential surfaces of genus less than $n$.
\end{cor}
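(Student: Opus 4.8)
The plan is to derive the corollary from Theorem~\ref{theorem:Main Theorem} together with Hartshorn's theorem \cite{HarHSHMHB}: if a compact orientable $3$-manifold admits a Heegaard splitting of distance $d$ and contains a closed essential (incompressible, not boundary-parallel) surface of genus $g$, then $d \leq 2g$. We use it in the generalized form that allows one side of the splitting to be a compression body with nonempty $\partial_-$, since that is the setting in which a knot exterior naturally inherits a splitting.

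First I would apply Theorem~\ref{theorem:Main Theorem} not to the integer $n$ of the corollary but to $2n$: a single stabilization of $(S,V,W)$ yields $(S',V',W')$ and a core $K$ of this stabilized splitting with $d\big((V'-n(K)),\,W'\big) > 2n$. Here $n(K)$ is an open regular neighborhood of $K$, so $V'-n(K)$ is a compression body with $\partial_-(V'-n(K)) = \partial n(K)$ a torus and $\partial_+(V'-n(K)) = S'$. Hence the exterior $E(K) = M - n(K)$ inherits the Heegaard splitting $\big(S',\, V'-n(K),\, W'\big)$, whose distance is precisely $d(\DD_{V'-n(K)}, \DD_{W'}) > 2n$.

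Next I would argue by contradiction: suppose $F \subset E(K)$ is a closed essential surface with $\mathrm{genus}(F) = g < n$. Applying Hartshorn's bound to the splitting $\big(S',\, V'-n(K),\, W'\big)$ of $E(K)$ gives $2n < d(\DD_{V'-n(K)}, \DD_{W'}) \leq 2g \leq 2(n-1)$, which is absurd. Therefore $E(K)$ contains no closed essential surface of genus less than $n$, as claimed.

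The main point to be careful about is the bookkeeping in the appeal to Hartshorn's theorem: one must confirm that his genus bound is valid for Heegaard splittings of manifolds with toroidal boundary (the version in which one compression-body side has $\partial_- \neq \emptyset$), and that the notion of \emph{essential surface} in his statement coincides with the one intended in the corollary (closed, incompressible, and not isotopic into the boundary torus). Everything else — the choice of the parameter $2n$, the observation that deleting a neighborhood of a core of $V'$ turns $V'$ into a compression body over $S'$, and the fact that the distance of the induced splitting is measured by the very disk sets $\DD_{V'-n(K)}$ and $\DD_{W'}$ appearing in Theorem~\ref{theorem:Main Theorem} — is routine.
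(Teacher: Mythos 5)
Your argument is correct and matches the paper's proof: both apply Theorem~\ref{theorem:Main Theorem} with the parameter $2n$ to obtain a stabilized splitting and a core $K$ with $d\big((V'-n(K)),W'\big) > 2n$, and then invoke Hartshorn's bound on essential closed surfaces. The only difference is that you explicitly flag (correctly) the need to confirm Hartshorn's theorem applies to the compression-body setting where $\partial_-\neq\emptyset$, a point the paper leaves implicit.
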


\begin{proof} 
Fix $n>0$.  Construct a knot $K$ as in Theorem \ref{theorem:Main Theorem}, which gives a splitting of $\overline{M-n(K)}$ of distance greater than $2n$. Then, by a theorem of Hartshorn \cite{HarHSHMHB}, $\overline{M-n(K)}$ has no closed incompressible surfaces of genus less than $n$. 

\end{proof}

This should be compared to a result of Kobayashi \cite{KobNisNSC3MHGGHS}. In Theorem 5.1, he shows that for $n>0$ if $M$ has a genus $h$ Heegaard splitting then there exists a knot $K$ whose exterior has no closed incompressible surfaces of genus less than $n$, but $K$ cannot sit as a core of a genus $h$ Heegaard splitting of $M$

A result of  Scharlemann and Tomova \cite{SchaTomAHGBD} shows that if a Heegaard splitting has sufficiently high distance in relation to its genus, it is the unique minimal genus Heegaard splitting.  This gives us:

\begin{cor}
Let $M=(S,V,W)$ be a genus $g$ Heegaard splitting of a closed, orientable $3$-manifold.  There exists a knot $K \subset M$ whose complement has Heegaard genus $g+1$. 
 \end{cor}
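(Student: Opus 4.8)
The plan is to read this off from Theorem~\ref{theorem:Main Theorem} together with the Scharlemann--Tomova distance bound \cite{SchaTomAHGBD}. First I would apply Theorem~\ref{theorem:Main Theorem} to the given genus~$g$ splitting $(S,V,W)$ with the distance parameter chosen larger than $2g$, say equal to $2g+1$. This produces a single stabilization $(S',V',W')$ --- a splitting of $M$ of genus $g+1$ --- and a core $K$ of it for which $d\big(V'-n(K),\,W'\big) > 2g+1$. In other words, the exterior $N := \overline{M - n(K)}$ carries the genus~$g+1$ Heegaard splitting $\big(S',\,V'-n(K),\,W'\big)$, and this splitting has distance greater than $2g$. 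In particular the Heegaard genus of $N$ is at most $g+1$, so it only remains to rule out a Heegaard splitting of $N$ of genus $\leq g$.

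For that I would invoke the theorem of Scharlemann and Tomova \cite{SchaTomAHGBD}: if $\Sigma$ and $\Sigma'$ are Heegaard surfaces for a compact orientable $3$-manifold and $\Sigma'$ is not a stabilization of $\Sigma$, then $d(\Sigma) \leq 2\,\mathrm{genus}(\Sigma')$. Take $\Sigma = S'$, the genus~$g+1$ surface constructed above, so that $d(\Sigma) > 2g$, and let $\Sigma'$ be any Heegaard surface for $N$ with $\mathrm{genus}(\Sigma') \leq g$. If $\Sigma'$ were not a stabilization of $\Sigma$, we would get $d(\Sigma) \leq 2\,\mathrm{genus}(\Sigma') \leq 2g$, contradicting $d(\Sigma) > 2g$; hence $\Sigma'$ is a stabilization of $\Sigma$. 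But any Heegaard surface that is a stabilization of $\Sigma$ has genus at least $\mathrm{genus}(\Sigma) = g+1$, whereas $\mathrm{genus}(\Sigma') \leq g$, a contradiction. Thus $N$ admits no Heegaard splitting of genus $\leq g$, and its Heegaard genus is exactly $g+1$.

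I do not expect a serious obstacle: essentially all the difficulty is already contained in Theorem~\ref{theorem:Main Theorem}, and the rest is a short comparison of genera. The only points to check are routine --- that the Scharlemann--Tomova inequality applies to the torus-bounded manifold $N$ (it is stated for compact orientable $3$-manifolds, with or without boundary), and that one should keep $g \geq 1$ so that the genus~$g+1$ splitting lies in the range where ``distance'' carries its usual meaning. If one additionally wanted the genus~$g+1$ splitting of $N$ to be its \emph{unique} minimal-genus Heegaard splitting, it would suffice to apply Theorem~\ref{theorem:Main Theorem} with distance parameter exceeding $2(g+1)$ instead, which that theorem freely allows.
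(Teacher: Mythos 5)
Your proof is correct and follows essentially the same route as the paper: apply Theorem~\ref{theorem:Main Theorem} to produce a genus~$g+1$ splitting of the knot exterior with high distance, then invoke the Scharlemann--Tomova bound to rule out lower-genus Heegaard surfaces. The only difference is cosmetic --- you take distance~$> 2g$ and spell out the Scharlemann--Tomova trichotomy, whereas the paper takes distance~$> 2(g+1)$ and cites the uniqueness-of-minimal-genus-splitting form of the result, which yields slightly more than the corollary states.
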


\begin{proof}
Start with a genus $g$ Heegaard splitting of $M$. Construct a knot $K$ as in Theorem \ref{theorem:Main Theorem} with a splitting of $\overline{M-n(K)}$ of distance greater than $2(g +1)$, and genus $g+1$. Then this will be the unique minimal genus Heegaard splitting of the knot complement.
\end{proof}

We say $K\subset M$ has a $(t,b)$-decomposition if $K$ can be isotoped to have exactly $b$ bridges with respect to a genus $t$ Heegaard splitting of $M$.  A $(t,0)$-decomposition requires $K$ to be a core of a genus $t$ Heegaard splitting.  Another result of Tomova \cite{TomDHSKC} yields:

\begin{cor}Let $M$ be a genus $g$, closed, orientable $3$-manifold. For any positive integers $t \geq g$ and $b$, there is a knot $K\subset M$ with tunnel number $t$ so that $K$ has no $(t,b)$-decomposition.  
\end{cor}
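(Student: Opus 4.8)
The plan is to feed a high-distance knot exterior, produced by Theorem \ref{theorem:Main Theorem}, into the comparison theorem of Tomova \cite{TomDHSKC} which bounds the distance of a Heegaard splitting of a knot exterior in terms of the complexity of any competing bridge surface; the high distance will rule out a $(t,b)$-bridge surface, while also (with Scharlemann--Tomova \cite{SchaTomAHGBD}) pinning the tunnel number to exactly $t$.

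Fix $t \geq g$ and $b$. Since $t \geq g$, stabilize a genus $g$ Heegaard splitting of $M$ exactly $t-g$ times to obtain a genus $t$ Heegaard splitting of $M$. Choose $N$ larger than the complexity bound appearing in Tomova's theorem for a genus-$t$ bridge surface meeting a knot in $2b$ points (so, up to the exact normalization, something like $N > 2t + 2b$ suffices), and also $N > 2(t+1)$. Apply Theorem \ref{theorem:Main Theorem} with $n = N$: one further stabilization yields a genus $t+1$ splitting $(S', V', W')$ of $M$ together with a core $K$ of $V'$ such that $d(V' - n(K), W') > N$. Now $V' - n(K)$ is a compression body whose negative boundary is the torus $\partial n(K)$ and $W'$ is a genus $t+1$ handlebody, so $P := (S', V' - n(K), W')$ is a genus $t+1$ Heegaard splitting of the exterior $\overline{M - n(K)}$ of Hempel distance $d(P) > N$. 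Since $K$ is a core of the genus $t+1$ splitting $(S', V', W')$ of $M$, it has a $(t+1,0)$-decomposition, hence tunnel number at most $t$.

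To see the tunnel number is exactly $t$: a Heegaard splitting of $\overline{M - n(K)}$ of genus $s \leq t$ is a bridge surface for $(M,K)$ with no bridges, and $d(P) > 2(t+1) > 2s$ forces (by Scharlemann--Tomova \cite{SchaTomAHGBD}, equivalently the no-bridge case of Tomova's theorem) that $P$ is the unique minimal-genus Heegaard splitting of the exterior; so no genus $\leq t$ splitting exists and the tunnel number is $t$. Finally, suppose $K$ had a $(t,b)$-decomposition with bridge surface $Q$, so $\widehat Q = Q \cap \overline{M - n(K)}$ has genus $t$ and $2b$ boundary components on $\partial n(K)$. Tomova's theorem \cite{TomDHSKC} compares $P$ with $\widehat Q$: because $d(P) > N$ exceeds the complexity of $\widehat Q$, the two surfaces must be related by isotopy and perturbation/destabilization — but $P$ is a closed surface of genus $t+1$ while $\widehat Q$ is a genus-$t$ surface with boundary, so no such relation is possible. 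This contradiction shows $K$ has no $(t,b)$-decomposition, completing the proof. The one step that requires care is extracting the precise complexity bound from \cite{TomDHSKC} (stated in terms of $\chi(\widehat Q \setminus K)$ versus genus and puncture count) and confirming that the single distance $N$ furnished by Theorem \ref{theorem:Main Theorem} dominates it for the given $t$ and $b$; the rest is bookkeeping about stabilizations and boundary patterns.
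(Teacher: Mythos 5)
Your proof is correct and takes essentially the same approach as the paper: construct a genus $t+1$ splitting of distance $> 2t + 2b$ via Theorem~\ref{theorem:Main Theorem}, then apply Tomova's distance bound \cite{TomDHSKC} to rule out any $(h,c)$-decomposition with $h \leq t$, $c \leq b$ (with $c=0$ giving the tunnel number). The paper handles both conclusions with a single application of Theorem 1.3 of \cite{TomDHSKC}, whereas you split the tunnel-number argument off via Scharlemann--Tomova, but these are the same bound in disguise.
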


\begin{proof}
Following [Section 4, \cite{MinMorSchHDK}], choose a Heegaard splitting of genus $t$ and construct a knot $K$ as in Theorem \ref{theorem:Main Theorem} with splitting $(S, V, W)$ of genus $t+1$ and of distance $> 2t + 2b$.  

Suppose $K$ admitted a $(h, c)$-decomposition for some $h \leq t$ and $c \leq b$. Let $Q$ be the Heegaard surface of $S^3$ associated to this decomposition. As the genus of $Q$ is less than $t+1$, it cannot be isotopic to (a stabilization of) $S$. So Theorem 1.3 of \cite{TomDHSKC} tells us that $d(S) \leq 2 - \chi(Q \setminus n(K)) = 2h + 2c \leq 2t + 2b$, a contradiction.

Further, letting $c = 0$ tells us that $\overline{M \setminus n(K)}$ has no splitting of genus less than $t+1$, and thus $K$ has tunnel number $t$.
\end{proof}

\subsection{Coarse Geometry}
In this section, we shall assume that $S$ is a closed surface. If two handlebody disk sets are distinct, we prove that they are, in fact, coarsely distinct. That is, 

\begin{theorem}
\label{theorem:CoarselyDistinct}
If $\DD_V$, $\DD_W \subset \mathcal{C}_S$ are two different handlebody disk sets, and $g(S) \geq 2$, then for all $K \geq 0$, $\DD_W \not \subset n_K(\DD_V)$, where $n_K(\cdot)$ denotes a $K-neighborhood$. 
\end{theorem}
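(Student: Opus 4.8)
The plan is to argue by contradiction, exploiting the same pseudo-Anosov machinery used in the proof of Theorem \ref{theorem:Main Theorem}. Suppose $\DD_V$ and $\DD_W$ are distinct handlebody disk sets but that $\DD_W \subset n_K(\DD_V)$ for some fixed $K \geq 0$. The idea is to manufacture a pseudo-Anosov homeomorphism $\Phi$ whose stable lamination $\lambda^+$ lies in $\overline{\DD_V}$ but not in $\overline{\DD_W}$, and whose unstable lamination $\lambda^-$ lies in $\overline{\DD_W}$ but not in $\overline{\DD_V}$ (or at least avoids $\overline{\DD_W}$); then Lemma \ref{lem:dis} gives $d(\DD_V, \Phi^m(\DD_W)) \to \infty$. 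On the other hand, if $\Phi$ extends over the handlebody $V$, then $\Phi^m(\DD_V) = \DD_V$, and the containment $\DD_W \subset n_K(\DD_V)$ would be preserved by $\Phi^m$, forcing $\Phi^m(\DD_W) \subset n_K(\DD_V)$ for all $m$, so $d(\DD_V, \Phi^m(\DD_W)) \leq K$ for all $m$ — a contradiction. So the real content is in producing such a $\Phi$ from the mere hypothesis that the two disk sets differ.

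First I would fix a handlebody $V$ with $\partial V = S$ and use Lemma \ref{lem:prettypants} (applied after realizing $V$ together with some complementary handlebody $W_0$ as an irreducible splitting, e.g. of a manifold with sufficiently complicated gluing, or more simply by working directly with the standard pants decomposition $\PP$ of $V$ and its associated train track $\tau_{\PP}$ as in Lemma \ref{lem:irreducible}) to build a curve $\gamma$ bounding a disk in $V$ that traverses all the seams of $\PP$. As in the proof of Theorem \ref{theorem:Main Theorem}, choose meridians $b, c \in \DD_V$ that together fill $S$, set $\Phi_0 = \tau_b \circ \tau_c^{-1}$ (pseudo-Anosov by Thurston's construction), and conjugate: $\Phi_N = \tau_\gamma^N \circ \Phi_0 \circ \tau_\gamma^{-N}$. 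Each $\Phi_N$ extends over $V$ since $\gamma, b, c$ all bound disks there, and the stable/unstable laminations $\lambda_N^\pm = \tau_\gamma^N(\lambda_0^\pm)$ converge to $[\gamma]$ as $N \to \infty$; since $\gamma$ is full type with respect to $\PP$, for $N$ large both $\lambda_N^\pm$ traverse all the seams of $\PP$, hence by Lemma \ref{lem:lam} neither lies in $\overline{\DD_V}$. Wait — that is the wrong side; I actually need $\lambda^+ \in \overline{\DD_V}$ is \emph{not} what I want either. Let me restate: I want $\lambda^+ \notin \overline{\DD_W}$ and $\lambda^- \notin \overline{\DD_V}$ so that Lemma \ref{lem:dis} applies to $X = \DD_V$, $Y = \DD_W$. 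Since $\Phi$ extends over $V$ and $\lambda^-$ is full type with respect to the pants decomposition $\PP$ of $V$, Lemma \ref{lem:lam} gives $\lambda^- \notin \overline{\DD_V}$ directly. For the condition $\lambda^+ \notin \overline{\DD_W}$: here I use that $\DD_W \subset n_K(\DD_V)$; since $\lambda^+$ is the limit of $\Phi^m(\DD_V) = \DD_V$-curves under no — rather, $\lambda^+$ is a limit in $\mathcal{PML}(S)$ of the curves $\Phi^m(\beta)$, and $d(\Phi^m(\beta), \DD_V)$ is bounded, so any curve converging to $\lambda^+$ stays a bounded distance from $\DD_V$, hence a bounded distance from $\DD_W$; but $\lambda^+$ being full type with respect to a pants decomposition of \emph{some} handlebody containing it in its closure would contradict Lemma \ref{lem:lam} — so $\lambda^+ \notin \overline{\DD_W}$ as long as $\lambda^+$ is full type with respect to a pants decomposition of $W$, which I also arrange by choosing $\gamma$ full type with respect to a pants decomposition $\QQ$ of $W$ as well, exactly as in Lemma \ref{lem:curve}.

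With both hypotheses of Lemma \ref{lem:dis} verified, $d(\DD_V, \Phi^m(\DD_W)) \to \infty$, contradicting the $K$-boundedness forced by $\Phi^m \DD_V = \DD_V$ and $\DD_W \subset n_K(\DD_V)$. This yields $\DD_W \not\subset n_K(\DD_V)$ for every $K$. The main obstacle I anticipate is the construction of a single curve $\gamma$ that simultaneously bounds a disk in $V$ \emph{and} is of full type with respect to pants decompositions of \emph{both} $V$ and $W$ — this is precisely where the hypothesis that the two disk sets are \emph{distinct} must enter, since if $\DD_V = \DD_W$ no such $\gamma$ can exist (a disk-bounding curve in $V$ would lie in $\overline{\DD_W}$, violating Lemma \ref{lem:lam}). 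Making this dependence on distinctness precise — showing that distinctness of the disk sets is exactly what allows one to find a meridian of $V$ that is "generic" relative to $W$, by a dimension/Baire-category or direct curve-complex argument, or alternatively by adapting the connect-sum and stabilization constructions of Sections \ref{sec:PantsandCurve}–\ref{sec:proof} — is the crux of the argument, and the rest is bookkeeping with Lemmas \ref{lem:dis} and \ref{lem:lam}.
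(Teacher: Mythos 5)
Your strategy — conjugate a pseudo-Anosov by a power of the twist about a meridian $a$ of $V$, invoke Lemma~\ref{lem:lam} to place the resulting laminations outside the disk-set closures, and apply Lemma~\ref{lem:dis} — is the right machinery, and your contradiction framing (using that $\Phi^m$ fixes $\DD_V$, hence would preserve $\DD_W \subset n_K(\DD_V)$) is a clean way to finish. However, there is a genuine gap at the step you yourself flag as ``the crux.''

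You want a curve $a$ that bounds a disk in $V$ \emph{and} traverses every seam of a pants decomposition $\PP$ of $V$, so that the laminations $\lambda^\pm$ (which limit onto $[a]$) end up outside $\overline{\DD_V}$. But such a curve cannot exist: $a \in \DD_V \subset \overline{\DD_V}$, and Lemma~\ref{lem:lam} says precisely that nothing in $\overline{\DD_V}$ traverses all seams of a pants decomposition of $V$. So the obstacle is not a matter of making ``distinctness precise'' or running a Baire-category argument; it is an outright impossibility, independent of $W$. (In the proof of Theorem~\ref{theorem:Main Theorem} this tension is absent because there $\PP'$ is a pants decomposition of the compression body $V' \setminus n(K)$, not of the handlebody $V'$; $a$ does \emph{not} bound a disk in $V' \setminus n(K)$, so Lemma~\ref{lem:lam} places no obstruction.) The paper's resolution is orthogonal to what you propose: rather than taking $X = \DD_V$ in Lemma~\ref{lem:dis}, take $X$ to be a \emph{single} disk $D^* \in \DD_V$. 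Then $\overline{X}$ is a single point of $\mathcal{PML}(S)$, so $\lambda^+ \notin \overline{X}$ is automatic (the stable lamination fills, hence is not a simple closed curve), and $a$ only needs to be of full type with respect to $\QQ$ — which is achievable. Lemma~\ref{lem:dis} then gives $d(\Phi^n(D^*), \DD_W) \to \infty$, and since $\Phi^n(D^*) \in \DD_V$, this exhibits disks of $V$ arbitrarily far from $\DD_W$; symmetry handles the other containment. Separately, the paper needs a short case analysis (stabilized versus not stabilized) to build such an $a$, since when $(S,V,W)$ is unstabilized one cannot appeal to the stabilization construction of Theorem~\ref{theorem:Main Theorem}; your proposal does not address how $a$ is produced in that case either.
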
 

\begin{proof} 
Two handlebody sets for the same surface determine a manifold with Heegaard splitting, $M = (S, V, W)$. Since $\DD_V \neq \DD_W$, $M \neq \#_k S^1 \times S^2$.  We consider separately the cases that $M = (S, V, W)$ is stabilized and the case that $M = (S, V, W)$ is not stabilized and in each case construct a meridian.

If $M = (S, V, W)$ is stabilized, destabilize the splitting once so that $M = (\hat{S},\hat{V},\hat{W})$.  Construct the pants decompositions $\PP$ and $\QQ$ of $V$ and $W$ as in Lemma \ref{lem:prettypants} as well as the curve $\gamma$ as in the proof of Theorem \ref{theorem:Main Theorem}.  Restabilize the splitting so that $M = (S, V, W)$ and construct the meridian $a$ , and extend the pants decompositions to $\PP'$ and $\QQ'$ as in the proof of Theorem \ref{theorem:Main Theorem}.  Note that $\partial a$ is of full type with respect to $\QQ'$, the pants decomposition of $W$.  

If $M = (S, V, W)$ is not stabilized then we use a slightly different construction.  As before, if the splitting is reducible, take a maximal collection of separating, reducing spheres $\{F_1, \dots, F_{l-1}\}$ for the splitting, so that $M = (S_1, V_1, W_1) \#_{F_1} (S_2, V_2, W_2) \#_{F_2} \dots \#_{F_{l-1}} (S_l, V_l, W_l)$, where the $(S_i, V_i, W_i)$ are irreducible splittings of prime $3$-manifolds for $1 \leq i \leq k$, and are $S^2 \times S^1$ for $k+1 \leq i \leq l$.   Note that since $(S, V, W)$ is not stabilized, $(S_i, V_i, W_i)$ is not a genus $1$ splitting of $S^3$ for all $i$.  

We will single out $(S_1, V_1, W_1)$, an irreducible splitting of a prime $3$-manifold, and construct a curve $\gamma_1$ on $S_1$ in the following way. Pick a pair of cut systems $\DD_1$ and $\EE_1$ for $V_1$ and $W_1$ respectively which intersect minimally. As in Lemma \ref{lem:prettypants}, construct pants decompositions $\PP_1$ and $\QQ_1$ for $V_1$ and $W_1$ respectively, of full type. Notice that, in particular, every seam of $\QQ_1$ is traversed by a curve of $\DD_1 \subsetneq \PP_1$. Now, if $g(S_1) \geq 2$, then let $D_1$ be a disk in $V_1$, disjoint from $\DD_1$, and let $a_1$ be the boundary curve resulting from banding two copies of $D_1$ together along an arc that wraps at least twice around each curve of $\DD_1$. (See Figure \ref{figure:gamma}).  If $g(S_1) = 1$, then let $a_1$ be the boundary of the unique disk in $V_1$.  Since $S_1$ is a lens space $\partial a_1$ intersects the single pants curve $Q=\QQ_1$ for $W_1$ at least twice with the same orientation and thus is of full type with respect to $\QQ_1$ of $W_1$.

\begin{figure}[tb]
\begin{center}
\includegraphics[width=4in]{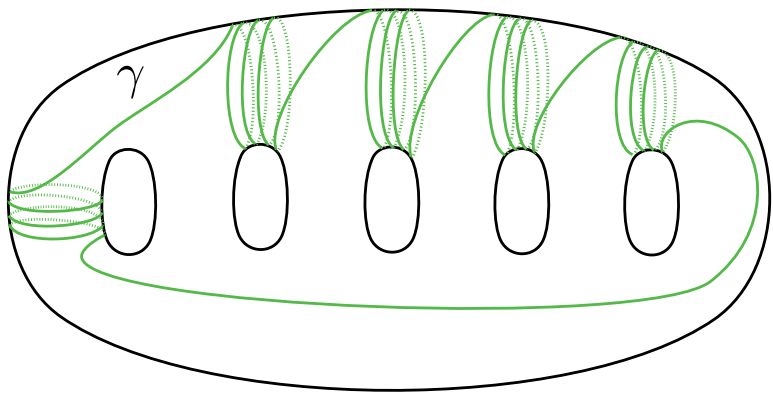}
\caption{The curve $\gamma$ is of full type with respect to $\QQ_1$.  To form $\gamma_1$ band sum two copies of a disk $D$ which intersects $\gamma$ once  along $\gamma$.}
\label{figure:gamma}
\end{center}
\end{figure}

For all $i > 1$, construct the curve $\gamma_i$, exactly as in the proof of Theorem \ref{theorem:Main Theorem}. Then reconstruct $S$, by connect summing the $S_i$, extend the pants decompositions of $W_i$ to the pants decomposition $\QQ$ of $W$, and construct the curve $\gamma$ by cutting and pasting together the $\gamma_i$ all exactly as in the proof of Theorem \ref{theorem:Main Theorem}. Then, $\gamma$ is of full type with respect to $\QQ$, and bounds a disk, $a$,  in $V$.

In either case, use $a$ to construct the map $\Phi$ in exactly the same way, and we have a pseudo-Anosov homeomorphism which extends to $V$, and whose stable and unstable laminations are of full type with respect to $\QQ$. 

Now, pick any disk $D^* \subset V$, and observe that in the curve complex, $d(\Phi^n(D^*), \DD_W) \to \infty$, which proves the theorem.
\end{proof} 
 
\begin{remark} The condition that $g(S) \geq 2$ is necessary, as the disk complex of any genus $1$ handlebody is a unique point, so they are all coarsely identical. 
\end{remark}

Gabai \cite{GabAFLCELS} has shown that $\partial \mathcal{C}_S$ is connected as long as $g(S) \geq 2$. Thus, recent theorems of Rafi and Schleimer tell us: 
	
\begin{theorem}[Rafi-Schleimer \cite{RafSchlCCCBR}] \label{theorem:bdddistance} 
For $g(S) \geq 2$, every quasi-isometry of $\mathcal{C}_S$ is bounded distance from a simplicial automorphism of $\mathcal{C}_S$.
\end{theorem}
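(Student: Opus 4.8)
The plan is to pass to the Gromov boundary, recognize the combinatorial structure of $\mathcal{C}_S$ there, and then invoke combinatorial rigidity. First I would record the standing inputs: by Masur and Minsky, $\mathcal{C}_S$ is a (non-locally-finite but) geodesic $\delta$-hyperbolic space, and by Klarreich its Gromov boundary $\partial\mathcal{C}_S$ is the space $\mathcal{EL}(S)$ of ending laminations on $S$; the hypothesis $g(S)\geq 2$ enters via Gabai \cite{GabAFLCELS}, which guarantees $\mathcal{EL}(S)$ is connected. Since a quasi-isometry of a geodesic hyperbolic space extends to a homeomorphism of its Gromov boundary, a quasi-isometry $f$ of $\mathcal{C}_S$ induces a self-homeomorphism $\partial f$ of $\mathcal{EL}(S)$, and two quasi-isometries at bounded distance induce the same $\partial f$. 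So the problem splits into: (i) show $\partial f$ is realized by a simplicial automorphism $g$ of $\mathcal{C}_S$; and (ii) show $g$ is bounded distance from $f$.

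Step (ii) I expect to be essentially formal once (i) is in hand: $f$ and $g$ induce the same boundary homeomorphism, and in a hyperbolic space in which geodesics between pairs of boundary points exist and are coarsely unique, two quasi-isometries agreeing at infinity are bounded distance apart --- one bounds $d(f(x),g(x))$ by realizing $x$ as a coarse center of an ideal triangle and tracking the three sides, using that $f$ and $g$ roughly preserve Gromov products.

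Step (i) is the heart, and here the plan is to reconstruct, from the coarse geometry of $\mathcal{C}_S$ alone, the extra structure on $\mathcal{EL}(S)$ that rigidifies its self-homeomorphisms. The device would be the family of essential subsurfaces $X\subsetneq S$, together with subsurface projections and Masur--Minsky hierarchies: by the Bounded Geodesic Image Theorem, a bi-infinite geodesic of $\mathcal{C}_S$ ``sees'' $X$ exactly when the subsurface projections of its two endpoints to $\mathcal{C}(X)$ are far apart, and this is a purely coarse-geometric, hence quasi-isometry invariant, condition. One then argues that $f$ permutes these ``holes'' compatibly with inclusion, hence induces quasi-isometries of the curve complexes $\mathcal{C}(X)$ of proper subsurfaces; running the induction downward in the complexity $\xi(S)$ to the annular subsurfaces produces a bijection of the vertex set of $\mathcal{C}_S$, that is, of isotopy classes of curves, and one then checks that disjointness of curves --- which must be detected via compatibility of the associated annular data, since links in $\mathcal{C}_S$ have diameter at most $2$ and so are metrically invisible --- is preserved, making the bijection a simplicial automorphism $g$ at bounded distance from $f$. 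Feeding $g$ into Ivanov's theorem \cite{IvaACCTS} (with the low-genus supplements of Korkmaz and Luo) then identifies it with an element of $\MCG^{\pm}(S)$, giving the sharper statement $QI(\mathcal{C}_S)\cong\MCG^{\pm}(S)$.

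I expect the main obstacle to be precisely this coarse recognition of subsurfaces, and of individual curves in particular. A curve contributes only a link of diameter at most $2$ to $\mathcal{C}_S$, so it is invisible to the first-order metric and must be recovered indirectly from the behavior of geodesics and their subsurface projections; making the induction on $\xi(S)$ go through --- and in particular arranging that ``disjointness'' transfers exactly, rather than merely up to a bounded multiplicative and additive error --- is where the hierarchy machinery seems indispensable. A secondary nuisance is that $\mathcal{C}_S$ is not locally finite, so several standard boundary-extension arguments usually stated for proper hyperbolic spaces must be checked directly.
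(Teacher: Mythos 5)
This theorem is stated in the paper with the attribution ``Rafi--Schleimer'' and is simply imported from \cite{RafSchlCCCBR}; the paper offers no proof of its own, so there is no in-paper argument to compare against. Your sketch is, nonetheless, a faithful reconstruction of the actual Rafi--Schleimer strategy: Klarreich's identification of $\partial\mathcal{C}_S$ with the space of ending laminations, Gabai's connectivity theorem \cite{GabAFLCELS} (which is precisely where the hypothesis $g(S)\geq 2$ enters, as the paper's surrounding discussion also notes), extension of the quasi-isometry to a boundary homeomorphism, Masur--Minsky subsurface projections and the Bounded Geodesic Image Theorem to make ``seeing a hole'' a quasi-isometry invariant, descent through proper subsurfaces down to annuli to recover the vertex set, and the terminal appeal to Ivanov \cite{IvaACCTS} (with the Korkmaz--Luo low-complexity supplements) to land in $\MCG^{\pm}(S)$.

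The gaps, however, are real and are the ones you yourself flag, so this is a proposal rather than a proof. Step (i) is the entire theorem: producing an honest simplicial automorphism, not merely a vertex self-map that roughly respects distances, requires that disjointness of curves transfer \emph{exactly}, and observing that links in $\mathcal{C}_S$ have diameter at most $2$ and so are coarsely invisible names the problem without solving it --- the hierarchy/subsurface-projection recognition argument that actually does this is the substantial technical core of \cite{RafSchlCCCBR} and cannot be waved through. Likewise, the claim that a quasi-isometry extends to a boundary homeomorphism is not formal here, because $\mathcal{C}_S$ is not proper; the extension does hold (via stability of quasi-geodesics in geodesic $\delta$-hyperbolic spaces, without local compactness), but it is a lemma one must prove, not a citation to the proper case, and you are right to flag it. Similarly, step (ii) as you describe it presumes coarse uniqueness of bi-infinite geodesics between boundary points in a non-proper setting, which again is true for $\mathcal{C}_S$ but requires an argument (Morse stability plus the density of endpoints of geodesics in the boundary). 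In short: correct outline, correctly identified hard points, and precisely those hard points are left unfilled.
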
 

\begin{cor}[Rafi-Schleimer \cite{RafSchlCCCBR}] \label{cor:QI}
For $g(S) \geq 2$, $QI(\mathcal{C}_S)$ is isomorphic to $Aut(\mathcal{C}_S)$, the group of simplicial automorphisms.
\end{cor}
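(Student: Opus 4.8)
\textit{Proof proposal.} The plan is to write down the isomorphism explicitly and check it is bijective, the injectivity being the only real content. Every simplicial automorphism $\phi$ of $\mathcal{C}_S$ is a bijective isometry, hence a quasi-isometry (with $k=1$, $c=0$, and $D$-dense for every $D>0$), so there is a natural map $\iota\colon Aut(\mathcal{C}_S)\to QI(\mathcal{C}_S)$, $\phi\mapsto[\phi]$, where $[\,\cdot\,]$ denotes the class under the relation $\sim$. Since a composite of automorphisms is an automorphism and $[\phi\circ\psi]=[\phi]\circ[\psi]$ (quasi-isometries are coarsely Lipschitz), $\iota$ is a group homomorphism. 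It remains to see that $\iota$ is onto and one-to-one.

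Surjectivity is immediate from Theorem \ref{theorem:bdddistance}: any quasi-isometry $f$ of $\mathcal{C}_S$ lies within bounded distance of some simplicial automorphism $\phi$, so $f\sim\phi$ and $[f]=\iota(\phi)$.

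Injectivity is the substantive point. Suppose $\phi,\psi\in Aut(\mathcal{C}_S)$ with $\phi\sim\psi$, i.e. $\sup_{x}d(\phi(x),\psi(x))=D<\infty$. Setting $g:=\psi^{-1}\circ\phi$ (again a simplicial automorphism, as $\psi^{-1}$ is an isometry) reduces the claim to: an automorphism $g$ of $\mathcal{C}_S$ with $\sup_{x}d(x,g(x))<\infty$ is the identity. I would argue this in two stages. First, using that $\mathcal{C}_S$ is Gromov hyperbolic of infinite diameter with nonempty boundary $\partial\mathcal{C}_S$ (connected, by Gabai \cite{GabAFLCELS}): if $\rho$ is a geodesic ray from a vertex to a point $\xi\in\partial\mathcal{C}_S$, parametrized by arc length, then $g(\rho(t))$ stays within $D$ of $\rho(t)$ for all $t$, so $g\rho$ is asymptotic to $\rho$ and $g$ fixes $\xi$; as $\xi$ was arbitrary, $g$ fixes $\partial\mathcal{C}_S$ pointwise. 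Second, by Ivanov's theorem \cite{IvaACCTS} $g$ is induced by a homeomorphism of $S$, i.e. an element of $\MCG(S)$; an element that fixes the two endpoints on $\partial\mathcal{C}_S$ of every pseudo-Anosov axis lies in the (virtually cyclic) stabilizer of each such pair, and an element lying in the stabilizers of the axes of two independent pseudo-Anosovs is trivial in $Aut(\mathcal{C}_S)$ (the kernel of $\MCG(S)\to Aut(\mathcal{C}_S)$ is trivial when $g(S)\ge 3$, and is generated by the hyperelliptic involution, which acts trivially, when $g(S)=2$). Hence $g$ is the identity automorphism, $\phi=\psi$, and $\iota$ is injective; being a bijective homomorphism it is an isomorphism, which is the assertion $QI(\mathcal{C}_S)\cong Aut(\mathcal{C}_S)$.

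I expect the injectivity step to be the only real obstacle, and within it the delicate case is a nontrivial \emph{finite-order} automorphism: excluding a periodic mapping class with uniformly bounded displacement on $\mathcal{C}_S$ is exactly where one must invoke the action on $\partial\mathcal{C}_S$ together with the non-elementary (indeed acylindrical) nature of the $\MCG(S)$-action, rather than any soft feature of the complex. The infinite-order cases are comparatively easy and do not even require the boundary argument: a pseudo-Anosov has unbounded displacement on curves far from its quasi-axis, and a reducible infinite-order class has unbounded displacement on curves with large subsurface projection to an active component, so $g\sim\mathrm{id}$ already forces $g$ to be periodic.
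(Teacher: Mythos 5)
The paper gives no proof of this corollary: it is stated purely as a citation of Rafi--Schleimer, with the intended inference being that it follows from Theorem~\ref{theorem:bdddistance} (which gives surjectivity of the natural map $Aut(\mathcal{C}_S)\to QI(\mathcal{C}_S)$). So there is no ``paper proof'' to compare against; you have supplied the argument the paper leaves implicit.

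Your surjectivity step is exactly what Theorem~\ref{theorem:bdddistance} provides, and your setup of $\iota$ as a homomorphism is fine. The content is injectivity, and your route -- bounded displacement forces $g$ to fix $\partial\mathcal{C}_S$ pointwise, then invoke Ivanov to realize $g$ by a mapping class $f$, then show $f$ is in the kernel of $\MCG(S)\to Aut(\mathcal{C}_S)$ -- is the right one and is essentially how Rafi and Schleimer argue. One remark on the final step: rather than intersecting virtually cyclic stabilizers of two independent pseudo-Anosov axes (which works but leaves a finite-order residue to dispose of by a separate argument, as you yourself flag), it is cleaner to observe that once $f$ fixes $\partial\mathcal{C}_S\cong\mathcal{EL}(S)$ pointwise, density of filling laminations in $\mathcal{PML}(S)$ plus continuity of the $f$-action forces $f$ to fix $\mathcal{PML}(S)$ pointwise, hence to fix every simple closed curve, i.e.\ $g=\mathrm{id}$ in $Aut(\mathcal{C}_S)$. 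This handles the periodic case at no extra cost. Also note a small slip: you invoke connectivity of $\partial\mathcal{C}_S$ via Gabai, but the boundary-fixing argument only needs $\partial\mathcal{C}_S$ nonempty and $\mathcal{C}_S$ Gromov-hyperbolic; Gabai's connectivity result is what the paper needs for Theorem~\ref{theorem:bdddistance} itself, not for deducing the corollary from it.
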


In other words, $\MCG(S) \cong \CMCG(S)$. Using Theorem \ref{theorem:CoarselyDistinct}, we prove an analogous result for the mapping class group of Heegaard splittings.

\begin{cor} For a Heegaard splitting $(S, V, W)$, of genus $\geq 2$, $\MCG(S, V, W) \cong \CMCG(S,V,W)$.

\end{cor}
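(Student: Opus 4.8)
The plan is to mimic the Rafi--Schleimer argument (Theorem \ref{theorem:bdddistance} and Corollary \ref{cor:QI}) at the level of the splitting, using Theorem \ref{theorem:CoarselyDistinct} as the extra ingredient that pins down the disk sets. We have a natural map $\Psi : \MCG(S,V,W) \to \CMCG(S,V,W)$: an element of $\Aut(S,V,W)$ induces a simplicial automorphism of $\mathcal{C}_S$, hence a quasi-isometry; and since the automorphism actually permutes the curves bounding disks in $V$ (resp.\ $W$), it carries $\DD_V$ and $\DD_W$ to themselves, so it satisfies the extra condition in the definition of $\CMCG(S,V,W)$. Isotopic automorphisms of $M$ preserving $S$ induce the same element of $\MCG(S)$ and hence $\CMCG(S)$, so $\Psi$ is well defined, and it is clearly a homomorphism.

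Next I would show $\Psi$ is injective. If $[\phi] \in \MCG(S,V,W)$ maps to the identity of $\CMCG(S,V,W)$, then in particular $[\phi] \mapsto$ identity in $\CMCG(S) = QI(\mathcal{C}_S)$; by Corollary \ref{cor:QI} (i.e.\ $\MCG(S) \cong \CMCG(S)$ via this same construction, which uses that bounded-distance simplicial automorphisms of $\mathcal{C}_S$ are equal, cf.\ Ivanov), $\phi$ is isotopic to the identity \emph{as a homeomorphism of $S$}. Standard arguments (e.g.\ Johnson--Rubinstein \cite{JohRubMCGHS}) then show that an element of $\MCG(S,V,W)$ that is trivial in $\MCG(S)$ is trivial in $\MCG(S,V,W)$ — the identity on $S$ extends uniquely up to isotopy over each compression body — so $[\phi]$ is trivial. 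Hence $\Psi$ is injective.

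The surjectivity of $\Psi$ is where Theorem \ref{theorem:CoarselyDistinct} does its work, and it is the main obstacle. Let $[f] \in \CMCG(S,V,W)$. Viewing $f$ as a quasi-isometry of $\mathcal{C}_S$, Theorem \ref{theorem:bdddistance} gives a simplicial automorphism $g$ of $\mathcal{C}_S$ with $d(f(x),g(x))$ bounded, and by Ivanov's theorem $g$ is induced by a homeomorphism $\phi$ of $S$, unique up to isotopy. I must check $\phi \in \Aut(S,V,W)$, i.e.\ that $\phi$ extends over both handlebodies; equivalently that $\phi(\DD_V) = \DD_V$ and $\phi(\DD_W) = \DD_W$ as subsets of $\mathcal{C}_S^0$. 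Since $[f]$ moves $\DD_V$ only a bounded distance, $g = \phi_*$ moves $\DD_V$ a bounded Hausdorff distance, so $\phi(\DD_V)$ and $\DD_V$ have bounded Hausdorff distance in $\mathcal{C}_S$. But $\phi(\DD_V)$ is the disk set of the handlebody $\phi(V)$, so $\DD_{\phi(V)}$ and $\DD_V$ are two handlebody disk sets at bounded Hausdorff distance; Theorem \ref{theorem:CoarselyDistinct} forces $\DD_{\phi(V)} = \DD_V$, and likewise $\DD_{\phi(W)} = \DD_W$. A handlebody is determined by its disk set (two handlebodies with the same meridian set are isotopic rel $S$), so $\phi$ carries $V$ to $V$ and $W$ to $W$ up to isotopy; adjusting $\phi$ by that isotopy, $\phi \in \Aut(S,V,W)$ and $\Psi([\phi]) = [f]$. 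The only subtlety to nail down is the passage from ``bounded Hausdorff distance'' as used in Theorem \ref{theorem:CoarselyDistinct} (stated as a one-sided neighborhood containment for all $K$) to the two-sided statement needed here, which is immediate by applying the theorem with the roles of the two disk sets reversed; and confirming that ``$f$ moves $\DD_V$ a bounded distance'' in the definition of $\CMCG(S,V,W)$ indeed translates to bounded Hausdorff distance between $f(\DD_V)$ and $\DD_V$, which it does since $f$ is a quasi-isometry and we may compose with the approximating $g$.
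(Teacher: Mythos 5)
Your proof is correct and takes essentially the same route as the paper: both use the natural map from $\MCG(S,V,W)$ into $\CMCG(S,V,W)$, invoke the Rafi--Schleimer Theorem \ref{theorem:bdddistance} to approximate a given quasi-isometry by a simplicial automorphism (hence by a homeomorphism, via Ivanov), and then use Theorem \ref{theorem:CoarselyDistinct} to conclude that this homeomorphism must carry $\DD_V$ and $\DD_W$ back to themselves, so it extends over both handlebodies. You are somewhat more explicit than the paper about injectivity (the paper simply cites the restriction of the isomorphism of Corollary \ref{cor:QI} and tacitly uses that $\MCG(S,V,W)\to\MCG(S)$ is injective, which you correctly flag as relying on Johnson--Rubinstein), and you note that one really only needs one direction of the neighborhood containment from Theorem \ref{theorem:CoarselyDistinct}; these are refinements of presentation, not a different argument.
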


\begin{proof}
There is an injective homomorphism from $\MCG(S, V, W)$ into $\CMCG(S, V, W)$ guaranteed by the restriction of the isomorphism from Corollary \ref{cor:QI} to mapping classes that fix $V$ and $W$. We prove that the image of this restriction is $\CMCG(S, V, W)$.

Let $f \in \CMCG(S, V, W)$. Then Theorem \ref{theorem:bdddistance} applies to tell us that $f$ is bounded distance from an isometry of $\mathcal{C}_S$, $g$.  So $f \sim g$, and $g$ also sends each handlebody set $\DD_V$ and $\DD_W$ to handlebody sets a bounded distance away. But by Theorem \ref{theorem:CoarselyDistinct}, $g$ must send $\DD_V$ and $\DD_W$ back onto themselves. Thus, $g \in \MCG(S, V, W)$. 
\end{proof}


\end{document}